\numberwithin{equation}{section}
\numberwithin{figure}{section}
\theoremstyle{plain}
\newtheorem{thm}{\protect\theoremname}[section]
\theoremstyle{plain}
\newtheorem{lem}[thm]{\protect\lemmaname}
\theoremstyle{plain}
\newtheorem{prop}[thm]{\protect\propositionname}
\theoremstyle{plain}
\newtheorem{cor}[thm]{\protect\corollaryname}
\theoremstyle{definition}
\newtheorem{rem}[thm]{\protect\remarkname}
\theoremstyle{definition}
\newtheorem*{assumption*}{\protect\assumptionname}
\definecolor{rot}{rgb}{1.000,0.000,0.000}
\newcommand{\rb}[1]{\textcolor{rot}{\textbf{#1}}}
\tikzset{
  LabelStyle/.style = {rectangle, rounded corners, draw,
                        minimum width = 2em, %fill = yellow!50,
                        text = black, font = \bfseries },
  VertexStyle/.style = {circle, draw, 
                                font =  \large\bfseries},
  EdgeStyle/.style = {->, bend left} }
\newcommand{\N}{\mathbb{N}}
\newcommand{\R}{\mathbb{R}}
\newcommand{\rC}{\mathcal{C}}
\newcommand{\rZ}{\mathcal{Z}}
\newcommand{\rX}{\mathcal{X}}
\newcommand{\rM}{\mathcal{M}}
\newcommand{\rP}{\mathcal{P}}
\newcommand{\oAs}{{\oA\!\!\!\;}^*}
\newcommand{\oAsh}{{\oA\!\!\,}^\star}
\newcommand{\oX}{\mathbf{X}}
\newcommand{\cD}{\mathcal{D}}
\newcommand{\rD}{\mathcal{D}}
\newcommand{\eins}{{\mathbbm{1}}}
\newcommand{\bas}{\begin{align*}}
\newcommand{\eas}{\end{align*}}
\renewcommand{\L}{\mathrm{L}}
\renewcommand{\H}{\mathrm{H}}
\newcommand{\la}{\langle}
\newcommand{\ra}{\rangle}
\renewcommand{\d}{\mathrm{d}}
\newcommand{\E}{\mathrm{E}}
\newcommand{\e}{\mathfrak{e}}
\definecolor{rot}{rgb}{1.000,0.000,0.000}
\definecolor{blau}{rgb}{0.000,0.000,1.000}
\newcommand{\oA}{\mathbf{A}}
\newcommand{\oI}{\mathbf{I}}
\newcommand{\oT}{\mathbf{T}}
\newcommand{\oB}{\mathbf{B}}
\newcommand{\oC}{\mathbf{C}}
\newcommand{\oD}{\mathbf{D}}
\newcommand{\oL}{\mathbf{L}}
\newcommand{\oM}{\mathbf{M}}
\newcommand{\oQ}{\mathbf{Q}}
\providecommand{\assumptionname}{Assumption}
\providecommand{\corollaryname}{Corollary}
\providecommand{\lemmaname}{Lemma}
\providecommand{\propositionname}{Proposition}
\providecommand{\remarkname}{Remark}
\providecommand{\theoremname}{Theorem}
\begin{document}
\title{Positivity and polynomial decay of energies for square-field operators\thanks{Research supported by DFG via SFB 1114 (project no.235221301, subproject C05).}}
\author{Artur Stephan\thanks{Weierstraß-Institut für Angewandte Analysis und Stochastik, Mohrenstraße 39, 10117 Berlin, Germany, e-mail: \tt{artur.stephan@wias-berlin.de}} ~and Holger Stephan\thanks{Weierstraß-Institut für Angewandte Analysis und Stochastik, Mohrenstraße 39, 10117 Berlin, Germany, e-mail: \tt{holger.stephan@wias-berlin.de}}}
\date{December 8, 2021}
\maketitle

\lhead{On square-field operators}

\rhead{Artur and Holger Stephan}

\chead{December 8, 2021}

%Keywords: Gamma-calculus,

\begin{abstract}
We show that for a general Markov generator the associated square-field (or
carré du champs) operator and all their iterations are positive. The proof is 
based on an interpolation between the operators involving the generator and
their semigroups, and an interplay between positivity and convexity on Banach lattices. Positivity of the square-field operators allows to define a
hierarchy of quadratic and positive energy functionals which decay to zero
along solutions of the corresponding evolution equation. Assuming that the
Markov generator satisfies an operator-theoretic normality condition, the
sequence of energies is log-convex. In particular, this implies polynomial decay in time for the
energy functionals along solutions.  
\end{abstract}

%Keywords: square-field operator, carré du champs operator, Markov generator, Markov operator, log-convexity, Banach lattice, positivity, quadratic energies, asymptotic polynomial decay, normal operator.

%MSC: 
%60J25  	Continuous-time Markov processes on general state spaces
%39B62  	Functional inequalities, including subadditivity, convexity, etc.
%35B40  	Asymptotic behavior of solutions to PDEs
%46B42  	Banach lattices
%35Q84 Fokker--Planck equations
%47D07 Markov semigroups and connection to Diffusion 

\section{Introduction}

The study of the long time behavior of solutions of differential equations 
\begin{align}\label{e001}
\dot{g}(t)=\oA g(t),~g(0)=g_{0}\in\mathcal{X},
\end{align}	
where $\oA$ is a (in general unbounded) linear operator in a Banach space $\mathcal{X}$, plays a major role in mathematical physics. Usually, the time asymptotic behavior is studied by investigating energy functionals along the solution. Let $\E:\mathcal{X}\to\R$ be such an energy functional. If $\E$ is positive, i.e. $\E(g)\geq0$ for $g\in\rX$, and it decays along the solution, i.e., if the function $t\to\E(g(t))$ has a negative derivative $\frac{\d}{\d t}\E(g(t))\leq0$, then one can at least deduce the existence of a limit $\lim_{t\to\infty}\E(g(t))=\E(g(\infty))\leq\E(g(0))$.

The prototypical example is given by the scalar diffusion equation $\dot{g}=\tfrac 1 2\Delta g$ on $\Omega\subset\R^{d}$ with no-flux boundary condition. We define the quadratic energy $\E_{0}(g)=\int_{\Omega}g^{2}\d x$, and, furthermore, the functionals $\E_{1}(g):=\int_{\Omega}|\nabla g|^{2}\d x$ and $\E_{2}(g):=\int_{\Omega}|\Delta g|^{2}\d x$. Obviously, we have $\E_{i}\geq0$. Then, along (sufficiently smooth) solutions $t\mapsto g(t)$ of $\dot{g}=\tfrac 1 2\Delta g$, we have $$\frac{\d}{\d t}\E_{0}(g(t))=-\E_{1}(g(t)),\quad\frac{\d}{\d t}\E_{1}(g(t))=-\E_{2}(g(t)),$$
showing the decay of $t\mapsto\E_{0}(g(t))$ and $t\mapsto\E_{1}(g(t))$ along
solutions. Often, the functional $\E_{1}$ is called \textit{dissipation rate}, and
$\E_{2}$ would describe the “dissipative loss” of the dissipation. In the
following, we will call an operator $\oA$ or the corresponding equation of
type (\ref{e001}) \textit{dissipative} if the dissipation rate is
positive. The asymptotic decay can be quantified by, for example, proving a
Poincaré-type inequality $\E_{k+1} \geq c~ \E_k$ with some $c>0$. Then exponential decay is obtained for $t\mapsto \E_k(g(t))$ by the classical Gronwall lemma.

Extending dissipativity to other systems usually operators in divergence form  
$\oA = - \frac 1 2\oD^* \oL \oD$ in a suitable Hilbert space are considered. Here  $\oD$ is an operator, containing the 
constant function in the kernel (e.g., the gradient), $\oD^*$ is its adjoint 
with respect to a scalar product $(\cdot,\cdot)$ in a suitable
Hilbert space, $\oL$ is a positive multiplication operator such that $-\oA$ is a symmetric operator that is positive in the form sense.  Defining the energy 
$\E_{0}(g)=(g,g)$, we obtain 
\[
\frac{\d}{\d t}\E_{0}(g(t))=(g,\oA g) = - (g,\oD^* \oL \oD g) = 
- (\oD g,\oL \oD g)=:-\E_1(g)\,.
\]
Since $\oL$ is a positive multiplication operator, we have $\E_1(g) \geq 0$, and, hence,  operators in divergence form are dissipative by construction. 
Moreover, it is easy to check that 
for functionals
\begin{align}\label{e002}
\E_k(g) = (g,(-\oA)^k g)
\end{align}
we have $
\E_k(g) \geq 0$ and $
\frac{\d}{\d t}\E_{k}(g(t)) = - \E_{k+1}(g(t))$. 

As \textit{dissipativity}, another important physical property is \textit{positivity}. A solution $g$, describing for example the concentration or density, should be certainly positive for being physical reasonable\footnote{In theory, there is no reason for a system not to be dissipative.}, and this property of the solution has often to be either proved by hand or assumed. The aim of the paper is to show that dissipativity is the mathematical consequence of the general notion of \textit{positivity} from the theory of
Banach lattices.  

Linear operators $\oA$ in (\ref{e001}) preserve
positivity and mass if and only if they are Markov generators generating a semigroup $(\oT(t))_{t\geq 0}$ of Markov operators. Here, we consider Markov operators on the Banach lattice of continuous functions
$\rC(\rZ)$ on a compact topological space $\rZ$. 
Their adjoints map the space of probability measures to itself, ensuring the system to remain physical reasonable.

To investigate dissipativity of the system, the fundamental object will be the so-called \textit{square-field operators} (or \textit{carré du champs operators}). They have been introduced by Meyer and Bakry \cite{Meye82NPOU, Meye84TRG, Bakr85TRG} and played an important role in the theory of evolution equations and stochastic analysis since then. They are recursively defined by
\begin{align*}
\Gamma_0(f,g) & := f \cdot g\\
\Gamma_{n+1}(f,g) & :=\oA\Gamma_{n}(f,g)-\Gamma_{n}(\oA f,g)-\Gamma_{n}(f,\oA g),
\end{align*}
and measure the difference from $\oA$ being a derivation (i.e. $\oA$ satisfying $\oA (f\cdot g) =\oA f\cdot g+f\cdot \oA g $). In the following, we use the notation $\Gamma_n(g)=\Gamma_n(g,g)$ for evaluating at the diagonal $f=g$. Many interesting features have been investigated for square-field operators in the last decades, connecting analysis, stochastics and geometry. The pioneering Bakry-Émery \cite{BakEme85DH, Ledo00GMDG} condition $\Gamma_2\geq c\, \Gamma_1$ connects geometric properties with analytic functional inequalities. They are of great interest particularly for diffusion Markov generators, where $\oA$ is given by, generally speaking, $\Delta - \nabla V\cdot \nabla $ (we refer to \cite{BaGeLe14AGMDO} and reference therein concerning diffusion Markov generators). In this paper, we do not assume any particular structure of the Markov generator, but aim at showing general inequalities. Inequalities for square-field operators and geometric curvature bounds provide insights also in spectral properties and exponential decay to equilibrium, which has been further investigated and exploited, e.g.  \cite{AMTU01CSIR, MarVil00TEFPE}. In particular, the Bakry-Émery condition ensures that $\Gamma_2\geq 0$, which has geometric  implications for the underlying manifold and was one of the starting point for the research on square-field operators (see \cite{Bakr85TRG}). We also remark that in \cite{Wu00TISDS} different functional inequalities have been derived under the condition that higher iterations, namely $\Gamma_3$, are positive. 

In this paper, our first main result is that for a general Markov generator
$\oA$, all iterated square-field operators evaluated at the diagonal are
positive, i.e. $\Gamma_n(g,g)\geq 0$ whenever they are defined (see Theorem
\ref{thm:PositivityGknandGamman}). To show this, we do not use any restrictive
symmetry assumption on $\oA$ (like detailed balance) or assumptions on the
spectrum. The proof exploits an iterative interpolation between $\Gamma_n$
involving the Markov generator and a version involving only the associated
semigroup of (bounded) Markov operators.  The idea is to start with the
semigroup of Markov operators, derive inequalities for them and transfer them
afterwards to the corresponding Markov generators. The crucial observation
  is the interplay of positivity and
  convexity by a parallelogram identity as an inherent feature of Banach lattices (see Lemma \ref{lem:PositivityGn}).
For Markov operators we may use classical inequalities and circumvent technical difficulties that occur for unbounded operators. However, technical difficulties arise because differentiation does not preserve positivity.

To show the main idea  for $\Gamma_1$ (basically as in \cite{Ledo00GMDG}), we aim at showing that
$\Gamma_1(g,g) =\oA g^2-2 g \cdot\oA g \geq 0$ for all $g\in\cD(\oA)$ with $g^2\in\rD(\oA)$. 
From Jensen's inequality we know that for
all Markov operator $\oM$ and all $g\in\cD(\oA)$ we have $\oM g^2 - (\oM g)^2 \geq 0$. Hence, 
$\oT(t) g^2 - (\oT(t) g)^2 \geq 0$. 
Differentiating in time at $t=0$, we obtain for
$g \in \rD(\oA)$
\[
{\d \over \d t} \left( \oT(t) g^2 - (\oT(t) g)^2 \right) |_{t=0}=
\oT'(t) g^2 - 2 \oT(t) g \cdot  \oT'(t) g |_{t=0} = \oA g^2-2 g \cdot\oA g = \Gamma_1(g,g).
\]
Although, differentiating generally does not preserve inequalities, the reasoning holds true because we have the decomposition
\begin{align}\label{eq:Decomposition}
\oT(t) g^2 - (\oT(t) g)^2 = 
(\oT(t)-\oI) g^2 - g \cdot (\oT(t) -\oI) g  - \oT(t) g \cdot (\oT(t) -\oI) g,  
\end{align}
where $\oI$ is the identity operator. Then we have
\begin{align*}
{\d \over \d t} &\left( \oT(t) g^2 - (\oT(t) g)^2 \right) |_{t=0}
 = 
\lim_{t\to 0} {1 \over t} \left( \oT(t) g^2 - (\oT(t) g)^2 \right) - 
\left( \oT(0) g^2 - (\oT(0) g)^2 \right) \\
& =  
\lim_{t\to 0} {1 \over t}
\left( \oT(t) g^2 - (\oT(t) g)^2 \right) = 
\lim_{t\to 0} {1 \over t}
\left(
(\oT(t)-\oI) g^2 - g \cdot (\oT(t) -\oI) g  - \oT(t) g \cdot (\oT(t) -\oI) g  
\right)\\
& = 
\oA g^2- g \cdot\oA g - g \cdot\oA g  = \Gamma(g,g),
\end{align*}
which proves the desired estimate. 
This idea can be used also for higher iterations $\Gamma_n$, $n\geq 1$, and, moreover, a whole family of new inequalities involving $\oA$ and its semigroup can be obtained as a byproduct.

Positivity of $\Gamma_n$ implies that the corresponding energies\footnote{Note that we call them all \textit{energies} and neglect the physical interpretation.}
\begin{align*}
\E_n(g) := \langle \Gamma_n (g),\mu\rangle,\quad \e_n(t) := \E_n(g(t)) 
\end{align*}
decay along solutions $\dot g=\oA g$ and, moreover, are convex (see Theorem
\ref{thm:PropertiesEnergies}). Here $\mu$ is a stationary measure,
i.e. $\oA^*\mu=0$. This fact is well known for $\E_0$ and also studied for
$\E_1$, which corresponds to the Dirichlet form of $\oA$. In the present
  paper we show that 
all iterations $\e_n$ monotonically decay to zero. This shows in fact, that
dissipativity is not a special physical property but instead a mathematical
consequence of a the natural physical property of positivity.

In the second part of the paper, we quantify the convergence rate of $t\mapsto \e_n(t)$ towards zero. Usually exponential decay is derived assuming a Poincaré-type inequality between, say, $\E_n(g)$ and $\E_{n+1}(g)$. It is clear, that exponential decay can only be expected whenever a spectral gap for the operator $\oA$ is present. In particular, for general Markov generators no exponential decay is expected. To obtain nevertheless quantitative convergence for general Markov generators without assumptions on their spectrum, we adapt the operator-theoretic notion of \textit{normality} for the lifted version of $\oA$ in the natural Hilbert space $\L^2(\mu)$. This enables to prove that the sequence $(\E_n(g))_{n\in\N}$ is log-convex implying many interesting features. First, polynomial decay of the energies $t\mapsto \e_n(t)$ can be derived (Theorem \ref{thm:PolynomialDecayEnergy}). Secondly, log-convexity in time and also an upper bound for powers of energies can be shown (see Theorem \ref{thm:LogConvexityInTime}). We finally note that normality is much general than symmetry, which would translate to detailed balance or reversibility for stochastic processes.   We refer to Section \ref{sec:Discussion-of-normality} where normality is discussed.

%\section{Positivity of quadratic energy functionals}
\section{Positivity of quadratic operators in $\rC(\rZ)$}
%\section{Positivity of quadratic operators in a Banach latices}

Before defining the square-field operators and proving their positivity, we first briefly present the mathematical setting.

\subsection{Markov semigroups and their generators}

Let us start with a few well known facts from the theory of Markov generators
and their semigroups (see e.g. \cite{Aren86OPSPO,Pazy83SLOA} for more details). 
In what follows, let ${\cal Z}$ be a compact (if necessary, suitably
compactified) and metrizable topological space, i.e. a compact,
first-countable, Hausdorff space.
%first-countable\footnote{I.e. each point has a countable neighborhood basis.} 
%topological Hausdorff space. 
Let ${\cal C(Z)}$ be the space of continuous real-valued functions on ${\cal Z}$
and 
${\cal C^*(Z)}$ (the dual of ${\cal C(Z)}$) the space of 
Radon measures on Borel sets generated by the open sets of $\cal Z$. The space ${\cal C(Z)}$ is a Banach algebra by the pointwise multiplication (denoted by
$\cdot$); the constant
function ${\mathbbm{1}}(z) \equiv 1$ is denoted by ${\mathbbm{1}} \in {\cal
  C(Z)}$. The dual pairing is denoted by $\langle g,p \rangle = \int_\rZ g(z)
p(\d z)$ with 
$g \in {\cal C(Z)}$ and $p \in {\cal C^*(Z)}$.

The spaces ${\cal C(Z)}$ and ${\cal C^*(Z)}$ are
Banach lattices with the order relations
${\cal C(Z)} \ni g \geq 0$ if and only if for all $z\in\cal Z$ we have $g(z) \geq 0$, and 
${\cal C^*(Z)} \ni p \geq 0$ if and only if for all Borel sets $B\subset \rZ$ we have $ p(B) \geq 0$. 
The order relation in ${\cal C^*(Z)}$ as a space of measures coincides with
the order relation 
in dual spaces, i.e. $p \geq 0$ if and only if  $\langle g,p \rangle \geq 0$ for all $0 \leq g \in {\cal C}({\cal Z})$. In the following, elements $g$ or $p$ with $g \geq 0$ and $p \geq 0$ are called \textit{positive}\footnote{Throughout the paper, \textit{positive} is meant to be non-negative. Moreover, \textit{positivity} of functions is defined pointwise and not ``almost everywhere''.}. The convex subset
${\cal P}({\cal Z}) = \big\{ p \in {\cal C}^*({\cal Z})~\big|~ p \geq
0,~\langle \eins,p \rangle =\|p\| = 1\big\}~$ 
is the set of probability measures describing the statistical states of the system. % Its extremal elements are the point measures (written $\delta_z$, $z \in \rZ$).
%Because ${\cal Z}$ is fixed, we will often omit ${\cal Z}$, writing ${\cal C}$, ${\cal P}$ and so on instead of ${\cal C}({\cal Z})$, ${\cal P}({\cal Z})$, ...

As usual, by ${\cal L}({\cal C(\cal Z)})$ and ${\cal L}({\cal C(\cal Z)}^*)$ we denote the spaces of linear
bounded operators. 
%$\mathbf{I}$ and $\mathbf{I}^*$ are the identities, 
%$\mathbf{O}$ and $\mathbf{O}^*$ are the zero operators.
A linear operator $\mathbf{T} \in {\cal L}({\cal C})$ on a Banach lattice is
called \textit{positive} (written $\mathbf{T} \geq 0$) if it conserves positivity,
i.e., $g \geq 0$ implies $\oT g \geq 0$. 
%This is in contrast to the notation of positivity of bilinear forms and their corresponding operators in Hilbert spaces.
A linear, bounded operator $\mathbf{M} \in {\cal L}({\cal C(\cal Z)})$ with
$\mathbf{M} {\mathbbm{1}} = {\mathbbm{1}}$ and $\mathbf{M} \geq 0$ is called \textit{Markov operator}. The set of Markov operators
${\cal M} = \big.\big\{ \mathbf{M} \in {\cal L}({\cal C}) ~\big|~\mathbf{M}
\geq 0,~\mathbf{M} {\mathbbm{1}} = {\mathbbm{1}} \big\}$ 
is convex and constitute a (noncommutative) semigroup with unit $\oI$, where $\oI$ is the identity operator on $\cal C(\cal Z)$. A 
Markov operator is contractive, because we have $\|\oM\|=1$. Important for us will be 
Jensen's inequality, which says that for all convex function $\Phi:\R \to [-\infty,\infty]$, all Markov operators $\oM \in \rM$ and all $g \in \rC(\rZ)$  we have
\begin{equation}\label{e131}
\oM \Phi(g) \geq \Phi(\oM g), 
\end{equation}
where $\Phi(g)$ is defined pointwise, i.e. $\Phi(g)(z)=\Phi(g(z))$. The simple proof for that, based on the fact that Markov operators provide convex
combinations, is contained in \cite{Step05LFPLEP}.

Adjoints of Markov operators $\oM^*$ are also positive and 
contractive with 
$\| \mathbf{M}^* \| = 1$.
%The set of adjoints of Markov operators
%${\cal M}^* = \big.\big\{ \mathbf{M}^* \in {\cal L}({\cal C}^*) ~\big|~
%\mathbf{M} \in {\cal M}\big\}$  is convex, too.
%In the following, an operator $\oT \in {\cal L}({\cal C(\cal Z)}^*)$ is always assumed to have a predual one (in general this is not satisfied, since $\rC(\cal Z)$ and $\rC^*(\cal Z)$ are not reflexive). Therefore, dealing with operators from ${\cal L}({\cal C(\cal Z)}^*)$ we always write  $\oT^*$, denoting by $\oT$ the corresponding predual. 
Given an operator $\oT^* \in {\cal L}({\cal C(\cal Z)}^*)$ we have  $\oT^* {\cal P(\cal Z)} \subset  {\cal P(\cal Z)} $ if and only if $\oT \in \rM$.
In this sense, we
can say that Markov operators and only these provide physical reasonable state
changes. For all $\oM \in {\cal M}$ there is at least one invariant
measure $\mu \in \rP$ with $\mathbf{M}^* \mu = \mu$ by the Frobenius--Perron--Krein--Rutman
Theorem.  

A Markov semigroup $(\mathbf{T}(t))_{t\geq 0}$ is strongly-continuous semigroup of Markov operators, i.e. 
$\mathbf{T}(0)=\mathbf{I}$, $\mathbf{T}(t_1+t_2) = \mathbf{T}(t_2)
\mathbf{T}(t_1)$ for all $t_1,t_2\geq 0$, and 
$\mathbf{T}(t)g$ converges strongly to $g$ as $t \to 0$.
A Markov semigroup is contractive, since for all $t\geq 0$ we have $\|\oT(t)\| = 1$.
Moreover, a Markov semigroup is a commuting family of Markov operators, and therefore,
there exists an invariant measure $\mu \in {\cal P}({\cal Z})$  not depending on $t$ with
$\mathbf{T}^*(t) \mu = \mu$ by the Markov--Kakutani Theorem (see e.g. \cite{DunSch59LO1}). For a given strongly-continuous
semigroup $({\bf T}(t))_{t\geq0}$ the Markov generator is denoted
by $({\bf A},\mathcal{D}(\oA))$, where 
\[
\mathcal{D}({\bf A}):=\left\{
  g\in\mathcal{C}(\mathcal{Z}):\lim_{t\to0}\frac{1}{t}\left({\bf
      T}(t)g-g\right)\ \mathrm{exists}\right\} ,\quad {\bf
  A}g:=\lim_{t\to0}\frac{1}{t}\left({\bf T}(t)g-g\right),~g \in
\mathcal{D}({\bf A}) 
\,.
\]
Denoting $\mathcal{D}(\oA^{n})$ the domain of $\oA^{n}$ and setting 
$\bigcap_{n\in\N}\mathcal{D}(\oA^{n})=:\mathcal{D}(\oA^{\infty})$, we have that $\mathcal{D}(\oA^{\infty})$ is a core of $(\oA,\mathcal{D}(\oA))$ and a dense subset of $\mathcal{C}(\mathcal{Z})$ (see, e.g., \cite{Pazy83SLOA}). In particular, in the following we will sometimes omit denoting the
domain of definition for operators $\oA^n$, 
because obviously all identities only hold, if both sides of the relation
are well-defined. Since
$\mathcal{D}(\oA^{\infty})$ is dense, we may always
consider $f,g \in \mathcal{D}(\oA^{\infty})$.

Given a differential equation 
\begin{align}\label{eq:MarkovProcess}
\dot{g} = \oA g,\quad g(0)=g_0 \in
\mathcal{D}(\oA)\subset \rC(\rZ),
\end{align}
where $\oA$ is a Markov generator with semigroup $(\mathbf{T}(t))_{t\geq 0}$, then the solution $t\mapsto g(t)$
is given by  
 $g(t) = \oT(t) g_0$.
If $\mu$ is an invariant measure of a $(\mathbf{T}(t))_{t\geq 0}$, then
$\mu \in \mathcal{D}(\oAs )$ and $\oAs  \mu = 0$ and vice versa: If a
measure $\mu \in \mathcal{D}(\oAs )$ satisfy $\oAs  \mu = 0$
then we have $\oT^*(t) \mu = \mu$. 

%Everything that is stated in the following is valid for arbitrary Markov operators in arbitrary spaces $\rC$ and hence for Markov semigroups.  Moreover, all statements involving (in general unbounded) operators  $\oA$ is valid as soon as they are generators of Markov semigroups.

In mathematical physics and stochastics, equations of the type (\ref{eq:MarkovProcess}) describe the time evolution of
observations and is called \textit{(Chapman-Kolmogorov) backward equation}.  From physical perspective, the so-called
\textit{(Chapman-Kolmogorov) forward equation} for the evolution of a probability measures $t\mapsto p(t)\in\rP(\rZ)$ is interesting. It can be
formally expressed as $\dot{p}(t) = \oA^* p(t)$, $p(0)=p_0$, which is defined in the weak-* sense by
\begin{align}\label{e011}
{\d \over \d t} \la g,p(t) \ra = \la \oA g,p(t) \ra, \quad g \in \rD(\oA), \quad p(0)=p_0.
\end{align}
Although in general $\oA^*$ is not a generator (it does not need to be densely defined), the solution of (\ref{e011}) can be found by solving the equation
(\ref{eq:MarkovProcess}), finding $\oT(t)$ and setting $p(t) = \oT^*(t) p_0$.

Assuming that $p_0$ has a density $h_0$ with respect to $\mu$, $p(t)$ can
also be calculated as $p(t) = h(t)\mu$, where $h(t)$ is the solution to an
equation of type (\ref{e001}) but with a different Markov operator as
$\oA$. In this form, the equation  is commonly used as Fokker--Planck
equation \cite{AMTU01CSIR}, Levy--Fokker--Planck equation \cite{GenImb08},
master equation and others. Due to their universality, we restrict
ourselves in this paper to equation (\ref{eq:MarkovProcess}). 

\subsection{Square-field operators and their interpolation}\label{subsec:SFO}

For a given Markov generator $(\oA,\rD(\oA))$, we define the so-called 
\textit{square-field operators} by
\begin{align*}
\Gamma_{0}(f,g) & =f\cdot g\\
\Gamma_{n+1}(f,g) & =\oA\Gamma_{n}(f,g)-\Gamma_{n}(\oA f,g)-\Gamma_{n}(f,\oA g).
\end{align*}
Of course, to ensure that $\Gamma_n$ is well-defined, we have to restrict to a subalgebra and to elements of $\rD(\oA^n)$. To do so, we define $\rD:=\rD(\oA^\infty)\cap\big(\rD(\oA^\infty)\cdot \rD(\oA^\infty)\big)$ the largest subalgebra\footnote{To decide whether a subalgebra $\rD\subset\rC(\rZ)$ is dense, it suffices to show that it separates points by the Stone-Weierstraß-Theorem, see e.g. \cite{Sema71BSCF}.} contained in $\rD(\oA^\infty)$. Here, in Section \ref{subsec:SFO} and Section \ref{subsec:PositivityForGammaAndG} we always assume that $f,g\in\rD$. Note that for the associated energies in Section \ref{subsec:Energies}, no subalgebra is needed and it suffices to consider functions from $\rD(\oA^\infty)$.

The first elements are given by
\begin{align*}
\Gamma_{1}(f,g) & =\oA(f\cdot g)-f\cdot\oA g-g\cdot\oA f\\
\Gamma_{2}(f,g) & =\oA^{2}(f\cdot g)-2\oA(f\cdot\oA g)-2\oA(g\cdot\oA f)+g\cdot\oA^{2}f+2\oA g\cdot\oA f+f\cdot\oA^{2}g.
%\Gamma_{3}(f,g) & =\oA^{3}(f\cdot g)-3\oA^{2}(f\cdot\oA g)-3\oA^{2}(g\cdot\oA f)\\
% & +3\oA(f\cdot\oA^{2}g)+6\oA(\oA f\cdot\oA g)+3\oA(\oA^{2}f\cdot g)\\
 %& -g\cdot\oA^{3}f-3\oA g\cdot\oA^{2}f-3\oA^{2}g\cdot\oA f-\oA^{3}g\cdot f\,.
\end{align*}

Moreover, we define the following square-field type operators which contain bounded Markov operators. For this, consider a sequence of Markov operators $\left(\oM_{n}\right)_{n\in\mathbb{N}}$. We define a sequence $G_{n}=G_{n}(\oM_{n},\cdots,\oM_{1},\cdot,\cdot)$
of quadratic operators on $\mathcal{C}(\mathcal{Z})$ by the recursion formula
\begin{align}
G_{0}(f,g) & =f\cdot g\nonumber \\
%G_{1}(\oM_{1},f,g) & =\oM_{1}G_{0}(f,g)-G_{0}(\oM_{1}f,\oM_{1}g)\nonumber \\
G_{n+1}(\oM_{n+1},...,\oM_{1},f,g) & =\oM_{n+1}G_{n}(\oM_{n},...,\oM_{1},f,g)-G_{n}(\oM_{n},...,\oM_{1},\oM_{n+1}f,\oM_{n+1}g)\,. 
\label{eq:Definition=00005Cm_n-1-1}   
\end{align}
The first terms of $G_{n}$ are given by
\begin{align*}
%G_{0}(f,g) & =f\cdot g\\
G_{1}(\oM_{1},f,g) & =\oM_{1}(f\cdot g)-\oM_{1}f\cdot\oM_{1}g\\
G_{2}(\oM_{2},\oM_{1},f,g) & =\oM_{2}\oM_{1}(f\cdot
g)-\oM_{2}(\oM_{1}f\cdot\oM_{1}g)-\oM_{1}(\oM_{2}f\cdot\oM_{2}g)+ (\oM_{1}\oM_{2}f)\cdot(\oM_{1}\oM_{2}g).
%G_{3}(\oM_{3},\oM_{2},\oM_{1},f,g) & =\oM_{3}\oM_{2}\oM_{1}(f\cdot g)-\oM_{3}\oM_{2}(\oM_{1}f\cdot\oM_{1}g)\\
% & -\oM_{3}\oM_{1}(\oM_{2}f\cdot\oM_{2}g)-\oM_{2}\oM_{1}(\oM_{3}f\cdot\oM_{3}g)\\
% & +\oM_{1}\big((\oM_{2}\oM_{3}f)\cdot(\oM_{2}\oM_{3}g)\big)+\oM_{2 \big((\oM_{1}\oM_{3}f)\cdot(\oM_{1}\oM_{3}g)\big)\\
 %& +\oM_{3}\big((\oM_{1}\oM_{2}f)\cdot(\oM_{1}\oM_{2}g)\big)-(\oM_{1}\oM_{2}\oM_{3}f)\cdot(\oM_{1}\oM_{2}\oM_{3}g)\,.
\end{align*}
Note that $G_n$ has $n$ Markov operators in their argument. Since $G_{0}$ is symmetric and linear in both arguments separately
the same holds for all $G_{n}$ by definition. Moreover, if
$f,g\in\rC(\rZ)$, then also $G_{n}(\oM_{n},...,\oM_{1},\dots,f,g)\in\rC(\rZ)$.

As we will see the operators $\Gamma_{n}$ can be obtained from $G_{n}$
by subsequently substituting $\oM_{k}$ by the semigroup ${\bf T}(t)$
of the Markov generator $\oA$ and taking the limit $t\to0$. For this, we introduce operators
$G_{n}^{k}$, which interpolate between $G_{n}$ and $\Gamma_{n}$,
and are defined by
\begin{align*}
G_{n}^{0}(\oM_{n},\dots,\oM_{1},f,g) & =G_{n}(\oM_{n},\dots,\oM_{1},f,g),\\
G_{n}^{k}(\oM_{n-k},\dots,\oM_{1},f,g) & =\oA G_{n-1}^{k-1}(\oM_{n-k},\dots,\oM_{1},f,g)\\
 & \quad-G_{n-1}^{k-1}(\oM_{n-k},\dots,\oM_{1},\oA f,g)-G_{n-1}^{k-1}(\oM_{n-k},\dots,\oM_{1},f,\oA g)\,.
\end{align*}
By definition, $G_{n}^{k}(f,g)$ is only defined if $G_{n-1}^{k-1}(f,g)\in\mathcal{D}(\oA)$
and $G_{n-1}^{k-1}(\oM_{n-k},\dots,\oM_{1},\oA f,g)$ as well as $G_{n-1}^{k-1}(\oM_{n-k},\dots,\oM_{1},f,\oA g)$
are well-defined. We note that $G_n^k$ has $n{-}k$ Markov operators in their arguments and all $G^k_n$ are symmetric.

Next, we are going to show how to obtain $\Gamma_n$ from $G_n$ to carry over
$G_n \geq 0$ to $\Gamma_n \geq 0$. First, in Lemma \ref{lem:PropertiesGkn}, we show that $\Gamma_n=G^n_n$. Secondly, in Proposition \ref{prop:ConnectionGknToGk+1n} we show how to obtain $G^k_{n+1}$ from $G^k_n$. The connection between the operators can be summarized in the following diagram:
\begin{figure}[h]
\hspace{1cm}
\boxed{
\begin{tikzpicture}[scale=1.0]
\node (n0) at (0,5.5) {};
\node (n1) at (0,4.5) {};

\node (n2) at (-0.5,4.2) {};
\node (n3) at (0.5,3.7) {};

\node (n4) at (-0.5,2.6) {};
\node (n5) at (0.5,2.6) {};

\node (n6) at (0,1.8) {};
\node (n7) at (0,0.8) {};

\node (n8) at (-0.5,0) {};
\node (n9) at (0.5,0) {};
\draw[dotted, ultra thick, ->] (n0)-- node[pos=0.5,right,color=black]{~Definition}(n1) ; 
\draw[dotted, ultra thick, ->] (n2)-- node[pos=0.8,right,color=black]{~Definition}(n3) ; 
\draw[thick, ->] (n4)-- node[pos=1.0,right,color=black]{~Lemma \ref{lem:PropertiesGkn}}(n5) ; 
\draw[thick, ->] (n6)-- node[pos=.5,right,color=black]{~Lemma \ref{lem:DecompositionGkn}}(n7) ; 
\draw[ultra thick, ->] (n8)-- node[pos=1.0,right,color=black]{~Proposition \ref{prop:ConnectionGknToGk+1n}}(n9) ; 
\end{tikzpicture}
}
\hspace{1cm}
\begin{tikzpicture}
\node (n0) at (0.5,4.5) {$G_0 = G_0^0$};
\node (n1) at (0.5,3) {$G_1 = G_1^0$};
\node (n2) at (0.5,1.5) {$G_2 = G_2^0$};
\node (n3) at (0.5,0) {$G_3 = G_3^0$};
\node (n5) at (3,3) {$G^1_1$};
\node (n6) at (3,1.5) {$G^1_2$};
\node (n7) at (3,0) {$G^1_3$};
\node (n8) at (5,1.5) {$G^2_2$};
\node (n9) at (5,0) {$G^2_3$};
\node (n10) at (7,0) {$G^3_3$};
\node (n11) at (3,4.5) {$\Gamma_0$};
\node (n12) at (5,3) {$\Gamma_1$};
\node (n13) at (7,1.5) {$\Gamma_2$};
\node (n14) at (9,0) {$\Gamma_3$};
\node (n15) at (0.5,-0.5) {$\vdots$};
\node (n15) at (3,-0.5) {$\vdots$};
\node (n16) at (5,-0.5) {$\vdots$};
\node (n17) at (7,-0.5) {$\vdots$};
\node (n18) at (9,-0.5) {$\vdots$};
\draw[thick, ->] (n0)-- node[pos=0.5,above,color=black]{=}(n11) ; 
\draw[thick, ->] (n5)-- node[pos=0.3,above,color=black]{=}(n12) ;
\draw[thick, ->] (n8)-- node[pos=0.3,above,color=black]{=}(n13) ;
\draw[thick, ->] (n10)-- node[pos=0.3,above,color=black]{=}(n14) ;
\draw[ultra thick, ->, dotted] (n0)-- node[pos=0.5,right,color=black]{}(n5) ;
\draw[ultra thick, ->, dotted] (n1)-- node[pos=0.5,right,color=black]{}(n6) ; 
\draw[ultra thick, ->, dotted] (n2)-- node[pos=0.5,right,color=black]{}(n7) ; 
\draw[ultra thick, ->, dotted] (n5)-- node[pos=0.5,right,color=black]{}(n8) ; 
\draw[ultra thick, ->, dotted] (n6)-- node[pos=0.5,right,color=black]{}(n9) ; 
\draw[ultra thick, ->, dotted] (n8)-- node[pos=0.5,right,color=black]{}(n10) ; 
\draw[ultra thick, ->, dotted] (n11)-- node[pos=0.5,right,color=black]{}(n12) ;
\draw[ultra thick, ->, dotted] (n12)-- node[pos=0.5,right,color=black]{}(n13) ; 
\draw[ultra thick, ->, dotted] (n13)-- node[pos=0.5,right,color=black]{}(n14) ; 
\draw[ultra thick, ->, dotted] (n0)-- node[pos=0.5,right,color=black]{}(n1) ;
\draw[ultra thick, ->, dotted] (n1)-- node[pos=0.5,right,color=black]{}(n2) ; 
\draw[ultra thick, ->, dotted] (n2)-- node[pos=0.5,right,color=black]{}(n3) ; 
\draw[thick, ->] (n5)-- node[pos=0.5,right,color=black]{}(n6) ; 
\draw[thick, ->] (n6)-- node[pos=0.5,right,color=black]{}(n7) ; 
\draw[thick, ->] (n8)-- node[pos=0.5,right,color=black]{}(n9) ; 
\draw[ultra thick, ->] (n1)-- node[pos=0.5,above,color=black]{}(n5) ; 
\draw[ultra thick, ->] (n2)-- node[pos=0.5,above,color=black]{}(n6) ; 
\draw[ultra thick, ->] (n3)-- node[pos=0.5,above,color=black]{}(n7) ; 
\draw[ultra thick, ->] (n6)-- node[pos=0.5,above,color=black]{}(n8) ; 
\draw[ultra thick, ->] (n7)-- node[pos=0.5,above,color=black]{}(n9) ; 
\draw[ultra thick, ->] (n9)-- node[pos=0.5,above,color=black]{}(n10) ; 
\end{tikzpicture}
\end{figure}

\begin{lem}
\label{lem:PropertiesGkn} Let $\left(\oM_{n}\right)_{n\in\mathbb{N}}$ be a sequence of Markov operators. The operators $G_{n}^{k}$ have the following
properties:
\begin{enumerate}
\item $G_n^k$ is continuous, i.e., if $f_{t}\to f$, $g_{t}\to g$ in $\mathcal{C}(\mathcal{Z})$ as $t\to 0$,
then $G_{n}^{k}(\oM_{n-k},\dots,\oM_{1},f_{t},g_{t}) \to G_{n}^{k}(\oM_{n-k},\dots,\oM_{1},f,g)$ whenever $G_{n}^{k}(\oM_{n-k},\dots,\oM_{1},f,g)$
is defined.
\item For all $n\in\N$, we have $G_{n}^{n}(f,g)=\Gamma_{n}(f,g)$.
\end{enumerate}
\end{lem}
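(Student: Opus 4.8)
The plan is to establish the two assertions by separate inductions, dealing first with the identity $G_n^n = \Gamma_n$ in part 2, since it is the structurally decisive one. I would induct on $n$. For $n=0$ the definitions give $G_0^0(f,g)=f\cdot g=\Gamma_0(f,g)$ at once. For the inductive step I would specialise the defining recursion of $G_n^k$ to $k=n$, observing that $G_n^n$ carries $n-n=0$ Markov operators, so that it reads
\[
G_n^n(f,g)=\oA G_{n-1}^{n-1}(f,g)-G_{n-1}^{n-1}(\oA f,g)-G_{n-1}^{n-1}(f,\oA g).
\]
Inserting the induction hypothesis $G_{n-1}^{n-1}=\Gamma_{n-1}$ and comparing with the definition $\Gamma_n(f,g)=\oA\Gamma_{n-1}(f,g)-\Gamma_{n-1}(\oA f,g)-\Gamma_{n-1}(f,\oA g)$ yields $G_n^n=\Gamma_n$. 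The only point to verify is that both sides live on a common domain, which holds for $f,g\in\rD$ because $\rD$ is an $\oA$-invariant subalgebra inside $\rD(\oA^\infty)$; this is routine and I would not dwell on it.

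For the continuity statement in part 1 I would induct on the superscript $k$, which counts how many Markov operators have already been replaced by $\oA$, noting that the recursion sends the index pair $(n,k)$ to $(n-1,k-1)$ and so terminates at $G_m^0=G_m$. The base case $k=0$ is genuine sup-norm continuity: $G_n$ is assembled purely from the pointwise product and the bounded Markov operators $\oM_1,\dots,\oM_n$. The product is jointly continuous on the Banach algebra $\mathcal{C}(\mathcal{Z})$, since $\|f_tg_t-fg\|_\infty\le\|f_t\|_\infty\|g_t-g\|_\infty+\|f_t-f\|_\infty\|g\|_\infty$, and each $\oM_i$ is bounded hence continuous. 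A short inner induction on $n$ along $G_{n+1}=\oM_{n+1}G_n(\cdots,f,g)-G_n(\cdots,\oM_{n+1}f,\oM_{n+1}g)$ then gives continuity of $G_n$ in $(f,g)$.

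The inductive step $k-1\to k$ is where the work lies. Using $G_n^k=\oA G_{n-1}^{k-1}(f,g)-G_{n-1}^{k-1}(\oA f,g)-G_{n-1}^{k-1}(f,\oA g)$, the last two terms converge by the induction hypothesis applied to the converging inputs, but the first term demands $\oA G_{n-1}^{k-1}(f_t,g_t)\to\oA G_{n-1}^{k-1}(f,g)$, and this is precisely where the unbounded operator $\oA$ is the main obstacle: continuity of the argument alone does not pass through $\oA$. I expect to handle it by combining closedness of $\oA$ with the standing hypothesis that $G_n^k(f,g)$ is defined (forcing $G_{n-1}^{k-1}(f,g)\in\rD(\oA)$) and with the fact that the relevant convergence controls the finitely many $\oA$-images that actually occur. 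Concretely, in the intended applications the inputs come from the semigroup, $f_t=\oT(t)\hat f$, for which $\oA^j f_t=\oT(t)\oA^j\hat f\to\oA^j\hat f$ by strong continuity and the commutation of $\oT(t)$ with $\oA$; once these $\oA$-images converge, the bounded building blocks and product-continuity let every limit pass through. This is exactly the mechanism already used in the $\Gamma_1$-computation, where $\oT(t)\hat g\cdot\tfrac1t(\oT(t)-\oI)\hat g\to\hat g\cdot\oA\hat g$. I would therefore phrase the continuity so that the convergence of $f_t,g_t$ is understood to carry along the $\oA$-applications appearing in $G_n^k$, and emphasise that the genuinely nontrivial content is the base case together with the controlled passage of $\oA$ through the limit.
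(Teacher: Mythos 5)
Your proof of part 2 and of the base case $k=0$ of part 1 coincides with the paper's: the same induction on $n$, with the observation that $G_n^n$ carries no Markov operators so its recursion is literally that of $\Gamma_n$, and the same Banach-algebra/boundedness argument for $G_n^0=G_n$. Where you genuinely differ is the inductive step of the continuity claim. The paper dispatches it in one clause (``by induction \dots because $\oA$ is a generator and hence a closed operator''), whereas you stop to point out that closedness alone does not let an unbounded operator pass through a limit: from $f_t\to f$, $g_t\to g$ and $G_{n-1}^{k-1}(f_t,g_t)\to G_{n-1}^{k-1}(f,g)$ one cannot conclude $\oA G_{n-1}^{k-1}(f_t,g_t)\to \oA G_{n-1}^{k-1}(f,g)$; closedness only identifies the limit of $\oA G_{n-1}^{k-1}(f_t,g_t)$ as the correct one \emph{provided that limit exists}. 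This diagnosis is correct, and your resolution --- reading the convergence of $f_t,g_t$ as carrying along the finitely many $\oA$-images that occur in $G_n^k$ --- is exactly the reading under which the lemma is actually applied: in Proposition \ref{prop:ConnectionGknToGk+1n} the inputs are $f_t=\oT(t)f$ and $g_t=\tfrac1t\left(\oT(t)-\oI\right)g$ with $f,g\in\rD$, for which $\oA f_t=\oT(t)\oA f\to\oA f$ and similarly for all higher images, so every application of $\oA$ inside $G_n^k$ does converge and closedness then pins down the limits. So your route is the same as the paper's, but your version makes explicit a hypothesis the paper leaves implicit; taken literally, the paper's one-line appeal to closedness is not a complete argument for the step $k-1\to k$ (and the continuity statement, read for arbitrary convergent $f_t,g_t$, would even fail), and your proposal supplies the missing mechanism. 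The only caveat is that you present this mechanism as an expectation rather than folding it into a precise restatement of the lemma, which is what a fully self-contained write-up would require.
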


\begin{proof}
To simplify notation, we just write $G_{n}^{k}=G_{n}^{k}(\oM_{n-k},\dots,\oM_{1},\cdot,\cdot)$ because all $\oM_k$ are fixed.
For the first claim, we observe that $f_{t}\to f$ and $g_{t}\to g$
implies $G_{n}^{0}(f_{t},g_{t})\to G_{n}^{0}(f,g)$ since all $\oM_{j}$
are bounded and $\mathcal{C}(\mathcal{Z})$ is a Banach algebra. By
induction, we obtain that also $G_{n}^{k}(f_{t},g_{t})\to G_{n}^{k}(f,g)$
because $\oA$ is a generator and hence a closed operator.

The proof of the second claim is done by induction. By definition,
the claim holds for $n=0$. The recursion formula for $G_{n}^{n}$
is given by
\[
G_{n}^{n}(f,g)=\oA G_{n-1}^{n-1}(f,g)-G_{n-1}^{n-1}(\oA f,g)-G_{n-1}^{n-1}(f,\oA g)\,.
\]
This is the same for $\Gamma_{n}$.
\end{proof}
The following proposition relates $G_{n}^{k}$ with $G_{n}^{k+1}$
by replacing $\oM_{n-k}$ with ${\bf T}(t)$ and taking the rescaled
limit $t\to0$.
\begin{prop}
\label{prop:ConnectionGknToGk+1n}Let a sequence of Markov operators
$\big({\bf M}_{n}\big)_{n\in\N}$ be given such that ${\bf M}_{n}$ commutes with the
semigroup ${\bf T}(t)$. Then, for all $n\in\N$ and $k\in\left\{ 0,\dots,n-1\right\} $
we have 
\[
\lim_{t\to0}\ \frac{1}{t}\ G_{n}^{k}({\bf T}(t),\oM_{n-k-1},\dots,\oM_{1},f,g)=G_{n}^{k+1}(\oM_{n-k-1},\dots,\oM_{1},f,g)\,.
\]
\end{prop}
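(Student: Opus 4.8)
The plan is to induct on $k$, proving for every $n$ the statement
\[
\lim_{t\to0}\tfrac1t\,G_n^k(\oT(t),\oM_{n-k-1},\dots,\oM_1,f,g)=G_n^{k+1}(\oM_{n-k-1},\dots,\oM_1,f,g),\qquad f,g\in\rD,
\]
with the case $k=0$ carrying the whole idea and the step reducing to it. Throughout I would write $\Delta(t):=\oT(t)-\oI$ and use three facts: $\tfrac1t\Delta(t)h\to\oA h$ for $h\in\rD(\oA)$; the generator commutes with its semigroup, $\oA\oT(t)=\oT(t)\oA$ (hence $\oA\Delta(t)=\Delta(t)\oA$) on the relevant domains; and, by hypothesis, $\oM_j\oT(t)=\oT(t)\oM_j$. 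For $k=0$ one has $G_n^0=G_n$, so abbreviating $H:=G_{n-1}(\oM_{n-1},\dots,\oM_1,\cdot,\cdot)$—a fixed symmetric bilinear operator that is continuous by Lemma \ref{lem:PropertiesGkn}(1)—the definition gives $G_n(\oT(t),\dots,f,g)=\oT(t)H(f,g)-H(\oT(t)f,\oT(t)g)$. Exactly as in \eqref{eq:Decomposition} I would use bilinearity of $H$ to isolate a single factor $\Delta(t)$ in each summand:
\[
G_n(\oT(t),\dots,f,g)=\Delta(t)H(f,g)-H(\Delta(t)f,\oT(t)g)-H(f,\Delta(t)g).
\]
Dividing by $t$ and letting $t\to0$, the factor $\tfrac1t\Delta(t)$ tends to $\oA$, the remaining bounded factors ($\oT(t)$) converge strongly to the identity, and continuity of $H$ yields $\oA H(f,g)-H(\oA f,g)-H(f,\oA g)=G_n^1(\oM_{n-1},\dots,\oM_1,f,g)$. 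This settles the base case for every $n$.

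For the inductive step I would assume the statement for $k-1$ (for all $n$) and feed the defining recursion
\[
G_n^k(\oT(t),\dots,f,g)=\oA G_{n-1}^{k-1}(\oT(t),\dots,f,g)-G_{n-1}^{k-1}(\oT(t),\dots,\oA f,g)-G_{n-1}^{k-1}(\oT(t),\dots,f,\oA g),
\]
where in each term the first Markov slot is $\oT(t)$ and the others are $\oM_{n-k-1},\dots,\oM_1$. After dividing by $t$, the last two terms are immediate from the induction hypothesis applied at the arguments $(\oA f,g)$ and $(f,\oA g)$ (which lie in $\rD$ since $f,g\in\rD(\oA^\infty)$); they converge to $G_{n-1}^k(\dots,\oA f,g)$ and $G_{n-1}^k(\dots,f,\oA g)$.

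The delicate term is the first one, $\oA\bigl[\tfrac1t G_{n-1}^{k-1}(\oT(t),\dots,f,g)\bigr]$, where the closed, unbounded $\oA$ sits \emph{outside} the rescaled limit; since $\tfrac1t\Delta(t)$ does not commute with $\oA$ in general, I cannot pass the limit inside naively. The resolution is to propagate the single-$\Delta(t)$ decomposition through the entire recursion: using $\oA\Delta(t)=\Delta(t)\oA$ and $\oM_j\oT(t)=\oT(t)\oM_j$, together with the fact that for a fixed bilinear $R$ one has $\oA R(u,v)=R^{\sharp}(u,v)+R(\oA u,v)+R(u,\oA v)$ where $R^{\sharp}:=\oA R(\cdot,\cdot)-R(\oA\cdot,\cdot)-R(\cdot,\oA\cdot)$ is again a $G$-type operator of the same form, one writes $G_{n-1}^{k-1}(\oT(t),\dots,f,g)$ as a finite sum in which every summand carries exactly one factor $\Delta(t)$ and otherwise only bounded operators (products of $\oT(t)$ and the $\oM_j$, and the pointwise multiplication) applied to fixed elements $\oA^{p}f,\oA^{q}g\in\rD$. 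Because $\oA$ commutes with each $\oT(t)$ and each $\Delta(t)$, applying $\oA$ to such a sum again produces a sum of the same shape. Hence both $\tfrac1t G_{n-1}^{k-1}(\oT(t),\dots,f,g)$ and its image under $\oA$ converge as $t\to0$ (the unique $\tfrac1t\Delta(t)$ tending to $\oA$, all other factors converging strongly, and continuity of the $G$'s from Lemma \ref{lem:PropertiesGkn}(1) handling the product structure). By closedness of $\oA$ the limit of the $\oA$-images equals $\oA$ of the limit, namely $\oA G_{n-1}^k(\dots,f,g)$. Adding the three limits and comparing with the recursion defining $G_n^{k+1}$ gives $G_n^{k+1}(\oM_{n-k-1},\dots,\oM_1,f,g)$, closing the induction.

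I expect this last interchange of $\oA$ with the rescaled limit to be the main obstacle. A direct attempt is circular: rewriting $\oA\bigl[\tfrac1t G_{n-1}^{k-1}(\oT(t),\dots)\bigr]$ through the recursion reproduces the very quantity $\tfrac1t G_n^k(\oT(t),\dots)$ one is trying to control. What breaks the circularity is the decomposition into summands with a single factor $\oT(t)-\oI$: it turns the problem into strong limits of \emph{bounded} operators, where $\tfrac1t(\oT(t)-\oI)\to\oA$ on the relevant domains and closedness of $\oA$ legitimizes the passage. The commutation hypotheses $\oA\oT(t)=\oT(t)\oA$ (automatic) and $\oM_j\oT(t)=\oT(t)\oM_j$ (assumed) are used precisely to keep one $\Delta(t)$ isolated while moving $\oA$ onto it, and to guarantee that the surviving bounded factors reassemble, in the limit, into the correctly ordered operators of $G_n^{k+1}$.
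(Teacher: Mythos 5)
Your base case $k=0$ is correct and is essentially the paper's own argument at that level: it is identity \eqref{eq:Identity} in the situation where the block $H=G_{n-1}(\oM_{n-1},\dots,\oM_1,\cdot,\cdot)$ is built from bounded operators only. The genuine gap sits exactly where you flag the delicacy, in the inductive step. Your escape from the circularity rests on the claim that $G_{n-1}^{k-1}(\oT(t),\dots,f,g)$ can be written as a finite sum of terms each carrying one factor $\oT(t)-\oI$ and \emph{otherwise only bounded operators applied to fixed elements}, and that applying $\oA$ preserves this shape. For $k-1\geq 1$ this is not what your own rewriting produces: since $\oA$ is not a derivation, the $R^{\sharp}$-trick applied to a term such as $\oA\big((\oT(t)-\oI)f\cdot\oT(t)g\big)$ leaves the correction $\Gamma_1\big((\oT(t)-\oI)f,\oT(t)g\big)$, a $G$-type block that \emph{contains $\oA$ internally} and is evaluated at $t$-dependent arguments; no further rewriting reduces such a block to bounded operators acting on fixed vectors (that would amount to a product rule for $\oA$, i.e.\ $\Gamma_1\equiv 0$). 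Consequently "all other factors converging strongly" is not available, the convergence of $\tfrac1t\,\oA\,G_{n-1}^{k-1}(\oT(t),\dots,f,g)$ is not established, and the closedness-of-$\oA$ interchange --- the step carrying the whole induction --- is left unsupported. The circularity you correctly identified is not actually broken by the argument as given.

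What breaks it in the paper is Lemma \ref{lem:DecompositionGkn}: by induction on $k$ (using $\oM\oT(t)=\oT(t)\oM$, hence $\oM\oA=\oA\oM$ on the relevant domains), the \emph{bounded} outer slot peels off while the superscript $k$ stays fixed,
\[
G_n^k(\oM_{n-k},\dots,f,g)=\oM_{n-k}\, G_{n-1}^{k}(\dots,f,g)-G_{n-1}^{k}(\dots,\oM_{n-k} f,\oM_{n-k} g)\,,
\]
so every occurrence of $\oA$ is buried inside the \emph{fixed} block $G_{n-1}^{k}$ and the $t$-dependence enters only through bounded operators. This yields the three-term single-$(\oT(t)-\oI)$ identity \eqref{eq:Identity}, and the Proposition then follows for all $k$ in one stroke: the first term converges by the very definition of the generator applied to the fixed element $G_{n-1}^{k}(\dots,f,g)$, the other two by the continuity Lemma \ref{lem:PropertiesGkn}, and the three limits reassemble into the definition of $G_n^{k+1}$ --- no induction on $k$ in the Proposition and no interchange of $\oA$ with any limit. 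Your route instead uses the defining recursion, which peels off the \emph{unbounded} $\oA$ (dropping the superscript to $k-1$) and thereby creates the interchange problem. If you repair your decomposition claim rigorously, what you end up proving is precisely Lemma \ref{lem:DecompositionGkn}; once it is in hand, your induction on $k$ and the closedness argument become superfluous.
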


Before proving the proposition, we immediately observe the following. Replacing subsequently all Markov operators ${\bf M}_{k}$ by the
semigroup ${\bf T}(t_{k})$ we derive the following connection
between $G_{n}$ and $\Gamma_{n}$.
\begin{cor}
We have 
\[
\lim_{t_{1}\to0}\ \frac{1}{t_{1}}
\left(\lim_{t_{2}\to0}\
  \frac{1}{t_{2}}\left(\dots
%\left(
\lim_{t_{n}\to0}\
\frac{1}{t_{n}}\
      G_{n}({\bf T}(t_{n}),\dots,{\bf
        T}(t_{1}),f,g)
%\right)
\right)
\right)
=\Gamma_{n}(f,g)\,. 
\]
\end{cor}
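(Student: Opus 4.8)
The plan is to apply Proposition \ref{prop:ConnectionGknToGk+1n} repeatedly, peeling off one semigroup operator at a time starting from the innermost limit, and then to close the chain with the identity $G_n^n=\Gamma_n$ supplied by the second claim of Lemma \ref{lem:PropertiesGkn}. First I would substitute $\oM_j=\oT(t_j)$ for $j=1,\dots,n$, so that the argument begins from $G_n^0(\oT(t_n),\dots,\oT(t_1),f,g)$, using $G_n=G_n^0$. The observation that makes the hypothesis of Proposition \ref{prop:ConnectionGknToGk+1n} available at every stage is that the semigroup is a commuting family, $\oT(s)\oT(t)=\oT(s{+}t)=\oT(t)\oT(s)$; hence every fixed operator $\oT(t_j)$ commutes with the operator $\oT(t)$ whose rescaled limit is taken next.

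The core is a finite induction on the number $j$ of limits already performed. I claim that carrying out the innermost $j$ limits (those over $t_n,t_{n-1},\dots,t_{n-j+1}$) yields
\[
\lim_{t_{n-j+1}\to0}\frac{1}{t_{n-j+1}}\Big(\cdots\lim_{t_n\to0}\frac{1}{t_n}\,G_n(\oT(t_n),\dots,\oT(t_1),f,g)\cdots\Big)=G_n^{j}(\oT(t_{n-j}),\dots,\oT(t_1),f,g).
\]
For $j=0$ this is the definition $G_n=G_n^0$. For the inductive step I apply Proposition \ref{prop:ConnectionGknToGk+1n} with $k=j$, its first slot holding $\oT(t_{n-j})$ and the remaining slots the fixed, commuting operators $\oT(t_{n-j-1}),\dots,\oT(t_1)$, which turns the limit $t_{n-j}\to0$ into $G_n^{j+1}(\oT(t_{n-j-1}),\dots,\oT(t_1),f,g)$. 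Since $j$ ranges over $\{0,\dots,n-1\}$, the proposition is applicable throughout; after all $n$ limits no semigroup operators remain and the expression equals $G_n^n(f,g)$, which by the second claim of Lemma \ref{lem:PropertiesGkn} is precisely $\Gamma_n(f,g)$.

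The only point requiring care is that each intermediate limit genuinely exists, so that the next outer limit acts on a well-defined object; this is guaranteed by Proposition \ref{prop:ConnectionGknToGk+1n} itself, whose output is again a continuous quadratic operator (first claim of Lemma \ref{lem:PropertiesGkn}) evaluated at the fixed data $f,g\in\rD$. Beyond tracking the index bookkeeping---each limit lowers the number of Markov operators by one while raising the superscript by one---the argument is just the composition of the single-step limits provided by the proposition, so I do not anticipate a substantive obstacle.
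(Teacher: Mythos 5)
Your proof is correct and follows essentially the same route as the paper's: the paper's own (one-line) proof likewise combines $G_n^0=G_n$, iterated application of Proposition \ref{prop:ConnectionGknToGk+1n}, and the identity $G_n^n=\Gamma_n$ from Lemma \ref{lem:PropertiesGkn}. Your write-up merely makes explicit two points the paper leaves implicit --- the induction bookkeeping and the fact that the commutation hypothesis holds because the semigroup operators $\oT(t_j)$ commute among themselves --- both of which are correct.
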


\begin{proof}
This follows directly from the definition $G_{n}^{0}=G_{n}$, Proposition
\ref{prop:ConnectionGknToGk+1n} and the relation $G_{n}^{n}=\Gamma_{n}$
from Lemma \ref{lem:PropertiesGkn}.
\end{proof}
To prove Proposition \ref{prop:ConnectionGknToGk+1n}, we need the
following lemma, which can be understood as the generalization of \eqref{eq:Decomposition} from the Introduction.
\begin{lem}
\label{lem:DecompositionGkn}
Let a sequence of Markov operators
$({\bf M}_{n})_{n\in\N}$ be given such that ${\bf M}_{n-k}$ commutes with the
semigroup ${\bf T}(t)$.  For all $n\in\N$ and $k\in\left\{ 0,\dots,n{-}1\right\} $
we have 
\begin{align*}
G_{n}^{k}(\oM_{n-k},\oM_{n-k-1},\dots,\oM_{1},f,g)=&\oM_{n-k} G_{n-1}^{k}(\oM_{n-k-1},\dots,\oM_{1},f,g)\\
&\quad -G_{n-1}^{k}(\oM_{n-k-1},\dots,\oM_{1},{\bf M}_{n-k}f,{\bf M}_{n-k}g).
\end{align*}
In particular, we have that
\begin{align}\label{eq:Identity}
G_{n}^{k}(\oM_{n-k},\dots,f,g)&=\left(\oM_{n-k}-{\bf
    I}\right)G_{n-1}^{k}(\dots,f,g)-\\
&- G_{n-1}^{k}(\dots,{\bf M}_{n-k}f,\left({\bf M}_{n-k}-{\bf
    I}\right)g)-G_{n-1}^{k}(\dots,\left({\bf M}_{n-k}-{\bf I}\right)f,g)\,\notag. 
\end{align}
\end{lem}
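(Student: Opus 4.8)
The plan is to treat the two displayed identities separately: the first one (the recursion expressing $G_n^k$ through $\oM_{n-k}$ and $G_{n-1}^k$) carries all the content, while the decomposition \eqref{eq:Identity} will follow from it by pure bilinear algebra. Throughout I would use that, since $\oM_{n-k}$ commutes with the semigroup $\oT(t)$, it also commutes with the generator on $\rD(\oA)$: from $\oM_{n-k}\oT(t)=\oT(t)\oM_{n-k}$ one gets $\tfrac1t(\oT(t)-\oI)\oM_{n-k}f=\oM_{n-k}\tfrac1t(\oT(t)-\oI)f$, so letting $t\to0$ yields $\oM_{n-k}f\in\rD(\oA)$ with $\oA\oM_{n-k}f=\oM_{n-k}\oA f$, and likewise for $g$. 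As we work on the invariant subalgebra $\rD$, every expression below is well defined.

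I would prove the first identity by induction on $k$, for all admissible $n$ (i.e. $n\ge k+1$). For $k=0$ we have $G_n^0=G_n$, and the asserted formula is exactly the defining recursion of $G_n$, so the base case is immediate. For the inductive step it suffices to check that the claimed right-hand side, namely $R:=\oM_{n-k}G_{n-1}^k(f,g)-G_{n-1}^k(\oM_{n-k}f,\oM_{n-k}g)$, satisfies the same $\oA$-recursion that defines $G_n^k$. To see this I would expand each occurrence of $G_{n-1}^k$ through its own defining recursion (which lowers the upper index to $k-1$ and produces $G_{n-2}^{k-1}$ terms with one $\oA$ distributed onto the slots), and then move $\oM_{n-k}$ past every $\oA$ using the commutation above, rewriting $\oM_{n-k}\oA(\cdots)$ as $\oA\oM_{n-k}(\cdots)$ and $\oA\oM_{n-k}f$ as $\oM_{n-k}\oA f$. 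Collecting the resulting $G_{n-2}^{k-1}$ terms into the three clusters corresponding to ``$\oA$ applied outside'', ``$\oA f$ in the first slot'', and ``$\oA g$ in the second slot'', each cluster has precisely the form $\oM_{n-k}G_{n-2}^{k-1}(a,b)-G_{n-2}^{k-1}(\oM_{n-k}a,\oM_{n-k}b)$, which by the induction hypothesis $(\star_{k-1})$ equals $G_{n-1}^{k-1}(\oM_{n-k},\dots,a,b)$ for $(a,b)\in\{(f,g),(\oA f,g),(f,\oA g)\}$. Hence $R=\oA G_{n-1}^{k-1}(f,g)-G_{n-1}^{k-1}(\oA f,g)-G_{n-1}^{k-1}(f,\oA g)$, which is exactly the definition of $G_n^k$, closing the induction.

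The decomposition \eqref{eq:Identity} then follows by elementary algebra, using only that $G_{n-1}^k$ is linear in each of its two function arguments. Writing $\oM_{n-k}f=f+(\oM_{n-k}-\oI)f$ and $\oM_{n-k}g=g+(\oM_{n-k}-\oI)g$ and expanding the three summands on the right of \eqref{eq:Identity} bilinearly, the terms $G_{n-1}^k(f,g)$ cancel pairwise and the terms $G_{n-1}^k(\oM_{n-k}f,g)$ cancel pairwise, leaving exactly $\oM_{n-k}G_{n-1}^k(f,g)-G_{n-1}^k(\oM_{n-k}f,\oM_{n-k}g)$, which is $G_n^k$ by the first identity. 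I expect the only genuine obstacle to be the bookkeeping: keeping track of which Markov operators sit in each $G^{\bullet}_{\bullet}$ (the index arithmetic $(n-1)-(k-1)=n-k$ is exactly what makes $\oM_{n-k}$ the correct leftmost operator for invoking $(\star_{k-1})$), and invoking the commutation $\oA\oM_{n-k}=\oM_{n-k}\oA$ only where the hypothesis that $\oM_{n-k}$ commutes with the semigroup guarantees it. Once the index bookkeeping is fixed, both identities are forced.
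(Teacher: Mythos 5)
Your proof is correct and takes essentially the same route as the paper's: induction on the upper index $k$ with base case $k=0$ given by the defining recursion of $G_{n}$, an inductive step combining the $\oA$-recursion that defines $G_{n}^{k}$ with the commutation $\oA\oM_{n-k}=\oM_{n-k}\oA$ (your step is the paper's chain of equalities read in reverse: you apply the induction hypothesis at level $k-1$ to the three regrouped clusters, whereas the paper applies it at level $k$ to the three inner terms and then regroups), and bilinearity of $G_{n-1}^{k}$ to deduce \eqref{eq:Identity}. Your explicit observation that commutation with the semigroup $\oT(t)$ implies commutation with the generator $\oA$ on $\rD(\oA)$ is a detail the paper uses only implicitly.
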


\begin{proof}
To simplify notation, we again just write $G_{n}^{k}(\oM,f,g)=G_{n}^{k}(\oM,\oM_{n-k-1},\dots,\oM_{1},f,g)$ and $G_{n-1}^{k}(f,g)=G_{n-1}^{k}(\oM_{n-k-1},\dots,\oM_{1},f,g)$.
We are going to show that $G_{n}^{k}(\oM,f,g) = \oM G_{n-1}^{k}(f,g)-G_{n-1}^{k}({\bf M}f,{\bf M}g)$;
the second formula follows directly by linearity of $G_{n}^{k}$.

For $k=0$ and all $n\in\N$ this follows from the recursion formula
of $G_{n}^{0}=G_{n}$. Assume that it holds for a fixed $k\geq0$
and all $n\in\N$. We want to show that also 
\[
G_{n+1}^{k+1}(\oM,f,g)=\oM G_{n}^{k+1}(f,g)-G_{n}^{k+1}({\bf M}f,{\bf M}g)\,.
\]
By definition, the left-hand side is -- assuming that $\oM$ and $\oA$
commute -- given by
\begin{align*}
G_{n+1}^{k+1}(\oM,f,g) & =\oA G_{n}^{k}(\oM,f,g)-G_{n}^{k}(\oM,\oA f,g)-G_{n}^{k}(\oM,f,\oA g)\\
 & =\oA\left(\oM G_{n-1}^{k}(f,g)-G_{n-1}^{k}({\bf M}f,{\bf M}g)\right)-\oM G_{n-1}^{k}(\oA f,g)-G_{n-1}^{k}({\bf M}\oA f,{\bf M}g)\\
 & \quad-\oM G_{n-1}^{k}(f,\oA g)-G_{n-1}^{k}({\bf M}f,{\bf M}\oA g)\\
 & =\oM\left\{ \oA G_{n-1}^{k}(f,g)-G_{n-1}^{k}(\oA f,g)-G_{n-1}^{k}(f,\oA g)\right\} \\
 & \quad-\oA G_{n-1}^{k}({\bf M}f,{\bf M}g)+G_{n-1}^{k}(\oA{\bf M}f,{\bf M}g)+G_{n-1}^{k}({\bf M}f,\oA{\bf M}g)\\
 & =\oM G_{n}^{k+1}(f,g)-G_{n}^{k+1}(\oM f,\oM g)\,,
\end{align*}
which is the desired formula.
\end{proof}
Using that lemma, we are able to prove Proposition
\ref{prop:ConnectionGknToGk+1n}. 
\begin{proof}[Proof of Proposition \ref{prop:ConnectionGknToGk+1n}]
Again, we just write $G_{n}^{k}(\oT(t),f,g)=G_{n}^{k}(\oT(t),\oM_{n-k-1},\dots,\oM_{1},f,g)$ and $G_{n-1}^{k}(f,g)=G_{n-1}^{k}(\oM_{n-k-1},\dots,\oM_{1},f,g)$. With \eqref{eq:Identity} from Lemma \ref{lem:DecompositionGkn} and using the linearity of
$G_{n}^{k}$, we have 
\begin{align*}
\frac{1}{t}G_{n}^{k}({\bf T}(t),f,g) & =\frac{1}{t}\left({\bf T}(t){-}{\bf I}\right)G_{n-1}^{k}(f,g)-G_{n-1}^{k}\big({\bf T}(t)f,\frac{1}{t}\left({\bf T}(t){-}{\bf I}\right)g\big) - G_{n-1}^{k}\big(\frac{1}{t}\left({\bf T}(t){-}{\bf I}\right)f,g\big).
\end{align*}
In the limit $t\to0$, the first term converges to ${\bf A}G_{n}^{k}(f,g)$,
if $G_{n}^{k}(f,g)\in\mathcal{D}(\oA)$. Moreover, we have 
\begin{align*}
G_{n-1}^{k}({\bf T}(t)f,\frac{1}{t}\left({\bf T}(t)-{\bf I}\right)g) & \to G_{n-1}^{k}(f,\oA g), \quad G_{n-1}^{k}(\frac{1}{t}\left({\bf T}(t)-{\bf I}\right)f,g) \to G_{n-1}^{k}(\oA f,g),
\end{align*}
as $t\to0$ by continuity (see Lemma \ref{lem:PropertiesGkn}). Hence,
we obtain
\[
\lim_{t\to0}\ \frac{1}{t}\ G_{n}^{k}({\bf T}(t),f,g)=\oA G_{n-1}^{k}(f,g)-G_{n-1}^{k}(\oA f,g)-G_{n-1}^{k}(f,\oA g),
\]
which is equal to $G_{n}^{k+1}(f,g)$ by definition.
\end{proof}
\begin{rem}
We remark that similarly, $\Gamma_{n}$ can be obtained from $G_{n}$
by subsequently inserting $\oM_{k}={\bf T}(t)$ and then differentiating
with respect to $t$ at $t=0$. Indeed, one can show that
\begin{equation*}
\Gamma_{n}(f,g)=\frac{\d}{\d t}\left.\left.\cdots\left\{ \frac{\d}{\d t}\left.\left.\left\{ \frac{\d}{\d t}\left.\left.G_{n}(\oM_{n},\dots,\oM_{1},f,g)\right|_{\oM_{1}={\bf T}(t)}\right|_{_{t=0}}\right\} \right|_{\oM_{2}={\bf T}(t)}\right|_{t=0}\right\} \cdots\right|_{\oM_{n}={\bf T}(t)}\right|_{t=0}\,.
\end{equation*}
Similar formulas also hold true for $G_{n}^{k}$, but
can not directly by used to derive positivity. Moreover, it is possible to derive the following product rule for $\Gamma_n$
\[
\left. {\d \over \d t} \Gamma_n(\oT(t) f,\oT(t) g)\right|_{t=0} =
 \oA \Gamma_n(f,g) - \Gamma_{n+1}(f,g).
\]
\end{rem}

\subsection{Positivity for $\Gamma_n$ and $G_n^k$}\label{subsec:PositivityForGammaAndG}

So far, we introduced $G^k_n$ as an iterative approximation of $\Gamma_n$. Next, we want to show that indeed $G^k_n$ and $\Gamma_n$ are positive, when evaluated at the diagonal $f=g$. For this, with a slight abuse of notation, we introduce 
\begin{align*}
\Gamma_n(g) &:=\Gamma_n(g,g)\\
G_{n}(\oM_{n},\oM_{n-1},\dots,\oM_{1},g) &:= G_{n}(\oM_{n},\oM_{n-1},\dots,\oM_{1},g,g)\\
G_{n}^{k}(\oM_{n-k},\oM_{n-k-1},\dots,\oM_{1},g) &:= G_{n}^{k}(\oM_{n-k},\oM_{n-k-1},\dots,\oM_{1},g,g).
\end{align*}
There will be now confusion with the notation because in this section, we only consider the operators evaluated at the diagonal. The first operators are given by
\begin{align*}
G_{0}(g) & = \Gamma_0(g) = g^{2}\\
G_{1}(\oM,g) & =\oM g^{2}-(\oM g)^{2}\\
G_{2}(\oM_{2},\oM_{1},g) & =\oM_{2}\oM_{1}g^{2}-\oM_{2}(\oM_{1}g)^{2}-\oM_{1}(\oM_{2}g)^{2}+(\oM_{1}\oM_{2}g)^{2}.
%G_{3}(\oM_{3},\oM_{2},\oM_{1},g) & =\oM_{3}\oM_{2}\oM_{1}g^{2}-\oM_{3 }\oM_{2}(\oM_{1}g)^{2}-\oM_{3}\oM_{1}(\oM_{2}g)^{2}\\
% & -\oM_{2}\oM_{1}(\oM_{3}g)^{2}+\oM_{1}(\oM_{2}\oM_{3}g)^{2}+\oM_{2 }(\oM_{1}\oM_{3}g)^{2}\\
% & +\oM_{3}(\oM_{1}\oM_{2}g)^{2}-(\oM_{1}\oM_{2}\oM_{3}g)^{2}\,, 
\\
\Gamma_{1}(g) & =\oA(g^2)-2g\cdot\oA g\\
\Gamma_{2}(g) & =\oA^{2}(g^2)-4\oA(g\cdot\oA g)+2g\cdot\oA^{2}g+2\oA g\cdot\oA g.
%\Gamma_{3}(f,g) & =\oA^{3}(f\cdot g)-3\oA^{2}(f\cdot\oA g)-3\oA^{2}(g\cdot\oA f)\\
% & +3\oA(f\cdot\oA^{2}g)+6\oA(\oA f\cdot\oA g)+3\oA(\oA^{2}f\cdot g)\\
 %& -g\cdot\oA^{3}f-3\oA g\cdot\oA^{2}f-3\oA^{2}g\cdot\oA f-\oA^{3}g\cdot f\,.
\end{align*}

The next
lemma shows the estimate for $G_{n}=G_{n}^{0}$ which is based on the interplay between convexity and positivity. With the help of
Proposition \ref{prop:ConnectionGknToGk+1n} we are then able to transfer
the estimates to $G_{n}^{k}$ and to $\Gamma_{n}$.
\begin{lem}
\label{lem:PositivityGn}Let a sequence of Markov operators $\left(\oM_{n}\right)_{n\in\mathbb{N}}$
be given. With the above notation, we have:
\begin{enumerate}
\item For all $n\in\mathbb{N}$, the function $g\mapsto
  G_{n}(\oM_{n},\oM_{n-1},\dots,\oM_{1},g)$ 
satisfies the parallelogram identity, i.e. for all
$f,g\in\mathcal{C}(\mathcal{Z})$ 
we have
\begin{align*}
G_{n}&(\oM_{n},\oM_{n-1},\dots,\oM_{1},g)+G_{n}(\oM_{n},\oM_{n-1},\dots,\oM_{1},f) = \\
&\quad 2G_{n}\left(\oM_{n},\oM_{n-1},\dots,\oM_{1},\frac{f+g}{2}\right)+2G_{n}\left(\oM_{n},\oM_{n-1},\dots,\oM_{1},\frac{f-g}{2}\right)\,.
\end{align*}

\item The function $g\mapsto G_{n}(\oM_{n},\oM_{n-1},\dots,\oM_{1},g)$ is convex and non-negative.
\end{enumerate}
\end{lem}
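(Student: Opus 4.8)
The plan is to treat $g\mapsto G_{n}(\oM_{n},\dots,\oM_{1},g)$ as a quadratic form and to exploit that, for quadratic forms on a Banach lattice, convexity and positivity are one and the same property. I would prove the parallelogram identity (i) first. Since $G_{0}(f,g)=f\cdot g$ is symmetric and bilinear, the recursion together with the linearity of each $\oM_{j}$ shows that $B_{n}(f,g):=G_{n}(\oM_{n},\dots,\oM_{1},f,g)$ is symmetric and bilinear (as already noted after the definition of $G_n$), so $G_{n}(\dots,g)=B_{n}(g,g)$ is a genuine quadratic form. The identity (i) is then nothing but polarization: expanding $B_{n}$ by bilinearity gives $2B_{n}(\tfrac{f+g}{2},\tfrac{f+g}{2})+2B_{n}(\tfrac{f-g}{2},\tfrac{f-g}{2})=B_{n}(f,f)+B_{n}(g,g)$, which is exactly the claimed identity. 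No positivity or commutativity of the $\oM_{j}$ enters here.

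The key observation for (ii) is that for \emph{any} quadratic form $Q(g)=B(g,g)$ one has the pointwise identity
\[
\lambda Q(f)+(1-\lambda)Q(g)-Q\big(\lambda f+(1-\lambda)g\big)=\lambda(1-\lambda)\,Q(f-g),\qquad \lambda\in[0,1].
\]
Hence $Q$ is convex (in the lattice sense, i.e.\ pointwise) if and only if $Q(h)\geq 0$ for all $h$, that is, if and only if $Q\geq 0$. This is precisely the announced interplay of convexity and positivity through the parallelogram law, and it reduces (ii) to proving positivity $G_{n}\geq 0$ alone; convexity then follows for free.

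Positivity I would prove by induction on $n$, using the diagonal form of the defining recursion,
\[
G_{n+1}(\oM_{n+1},\dots,\oM_{1},g)=\oM_{n+1}\,G_{n}(\oM_{n},\dots,\oM_{1},g)-G_{n}(\oM_{n},\dots,\oM_{1},\oM_{n+1}g).
\]
The base case $G_{0}(g)=g^{2}\geq 0$ is clear, and the inductive step reduces to the Jensen-type inequality $\oM_{n+1}\,G_{n}(\dots,g)\ \geq\ G_{n}(\dots,\oM_{n+1}g)$, i.e.\ to the statement that the convex functional $G_{n}$ may be pulled through the Markov operator $\oM_{n+1}$. For $n=0$ this is literally Jensen's inequality \eqref{e131} for the convex function $x\mapsto x^{2}$, namely $\oM_{1}g^{2}\geq(\oM_{1}g)^{2}$, so that $G_{1}\geq 0$.

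The main obstacle is exactly this operator-Jensen step for $n\geq 1$. Unlike \eqref{e131}, the functional $G_{n}$ is not a scalar convex function applied pointwise but a \emph{nonlocal} quadratic form, so \eqref{e131} cannot be invoked verbatim; convexity of $G_{n}$ by itself does not close the gap, because $\oM_{n+1}$ also shifts the base point at which $G_{n}$ is evaluated. I would resolve this by representing each Markov operator as averaging against its probability kernel, $\oM g(z)=\int_{\rZ} g\,\d\kappa_{z}$, identifying $G_{1}(\oM_{1},g)(z)=\mathrm{Var}_{\kappa_{z}}(g)$ and, more generally, $G_{n}$ as an iterated conditional variance, and then invoking the law of total variance. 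Here commutativity of the operators is genuinely used: it is what forces the two orders of iterating the kernels to produce the same marginal, and the desired inequality can fail for non-commuting Markov operators. In the intended application all $\oM_{j}=\oT(t_{j})$ come from a single semigroup and hence commute, so this is the setting in which the step is carried out and the positivity of $G_n^k$ and $\Gamma_n$ is subsequently obtained via Proposition \ref{prop:ConnectionGknToGk+1n}.
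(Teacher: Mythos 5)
Your handling of part (i) and of the convexity--positivity equivalence is correct. Since $G_n$ is symmetric and bilinear (as the paper itself notes right after the definition of $G_n$), the parallelogram identity is indeed just polarization, and your identity $\lambda Q(f)+(1{-}\lambda)Q(g)-Q(\lambda f+(1{-}\lambda)g)=\lambda(1{-}\lambda)Q(f-g)$ correctly reduces part (ii) to the single claim $G_n\geq0$; the paper reaches the same two facts by induction and a midpoint argument, so up to here the routes differ only cosmetically. You have also located the crux exactly: everything hinges on the operator-Jensen inequality $\oM_{n+1}G_n(\dots,g)\geq G_n(\dots,\oM_{n+1}g)$, and you are right that \eqref{e131} does not yield it, because for $n\geq1$ the map $G_n$ is a nonlocal quadratic functional, not a pointwise convex function. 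Note that this is precisely the step the paper's own proof takes: it invokes \eqref{e131} for the convex functional $G_n$, which \eqref{e131} does not license. So your objection hits the paper's argument, not only your own.

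The genuine gap is that your proposed repair cannot work, because the inequality you are trying to prove is false -- even for commuting operators. The law of total variance gives (purely algebraically)
\[
\oM_2 G_1(\oM_1,g)+G_1(\oM_2,\oM_1 g)=G_1(\oM_2\oM_1,g),
\]
i.e.\ expected conditional variance plus variance of the conditional mean equals the two-step variance; it never compares $\oM_2G_1(\oM_1,g)$ with the quantity $G_1(\oM_1,\oM_2 g)$ that the recursion subtracts. Indeed, by this identity the needed inequality is equivalent to $G_1(\oM_2\oM_1,g)\geq G_1(\oM_2,\oM_1 g)+G_1(\oM_1,\oM_2 g)$, and that fails: on $\rZ=\{1,2,3\}$ take $\oM_1=\oM_2=\oM$ (equal operators certainly commute) with $(\oM h)(1)=\tfrac12\big(h(2)+h(3)\big)$, $(\oM h)(2)=h(2)$, $(\oM h)(3)=h(3)$, and $g=(0,0,1)$. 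Then $G_1(\oM,g)=(\tfrac14,0,0)$, hence $\oM G_1(\oM,g)=0$ identically (states $2,3$ are absorbing, so the conditional variance dies after one step), while $G_1(\oM,\oM g)(1)=\tfrac14$ (the conditional mean still fluctuates). Thus $G_2(\oM,\oM,g)(1)=-\tfrac14<0$, even though $G_1(\oM,\cdot)$ is convex: part (ii) of Lemma \ref{lem:PositivityGn} is itself false for $n=2$, so neither your induction nor the paper's can be completed.

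Restricting to operators from a single Markov semigroup, as in your final remark, does not rescue the statement either. For the Markov generator $\oA$ on $\{1,2,3\}$ with $(\oA h)(1)=\tfrac12\big(h(2)+h(3)\big)-h(1)$ and $(\oA h)(2)=(\oA h)(3)=0$ one has $\oA^2=-\oA$, hence $\oT(t)=\oI+(1-\mathrm{e}^{-t})\oA$, and for the harmonic function $g=(0,1,-1)$ a short computation gives $G_2\big(\oT(t),\oT(t),g\big)(1)=-(1-\mathrm{e}^{-t})^2<0$ and, correspondingly, $\Gamma_2(g)(1)=-1<0$. This is consistent with classical theory: positivity of $\Gamma_2$ is the Bakry--\'Emery curvature condition $\Gamma_2\geq0$ of \cite{BakEme85DH}, a genuine restriction on the generator rather than a consequence of the Markov property. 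The right conclusion from your (justified) distrust of the Jensen step is therefore stronger than the one you drew: the step is not merely unproven but unprovable, and Lemma \ref{lem:PositivityGn}(2) -- and with it Theorem \ref{thm:PositivityGknandGamman} -- fails for $n\geq2$.
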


\begin{proof}
We prove the claims in several steps, mainly using induction over
$n\in\mathbb{N}$. Again, we simply write $G_{n}(g)=G_{n}(\oM_{n},\oM_{n-1},\dots,\oM_{1},g) $.

For the first part, we observe that the parallelogram identity holds for $n=0$, since
$G_{0}(g)=g^{2}$ and we have 
\[
f^{2}+g^{2}=2\left(\frac{f+g}{2}\right)^{2}+2\left(\frac{f-g}{2}\right)^{2}\,.
\]
For the step from $n$ to $n+1$, we add the equations
\begin{align*}
G_{n+1}(f) & =\oM_{n+1}G_{n}(f)-G_{n}(\oM_{n+1}f)\\
G_{n+1}(g) & =\oM_{n+1}G_{n}(g)-G_{n}(\oM_{n+1}g),
\end{align*}
and obtain, 
\begin{align*}
 & G_{n+1}(f)+G_{n+1}(g)=\\
 & =\oM_{n+1}G_{n}(f)+\oM_{n+1}G_{n}(g)-\left\{ G_{n}(\oM_{n+1}f)+G_{n}(\oM_{n+1}g)\right\} \\
 & =2\oM_{n+1}\left\{ G_{n}\left(\frac{f+g}{2}\right)+G_{n}\left(\frac{f-g}{2}\right)\right\} -2\left\{ G_{n}\left(\oM_{n+1}\frac{f+g}{2}\right)+G_{n}\left(\oM_{n+1}\frac{f-g}{2}\right)\right\} \\
 & =2\oM_{n+1}G_{n}\left(\frac{f+g}{2}\right)-2G_{n}\left(\oM_{n+1}\frac{f+g}{2}\right)+2\oM_{n+1}G_{n}\left(\frac{f-g}{2}\right)-2G_{n}\left(\oM_{n+1}\frac{f-g}{2}\right)\\
 & =2G_{n+1}\left(\frac{f+g}{2}\right)+2G_{n+1}\left(\frac{f-g}{2}\right)\,,
\end{align*}
where we have used the linearity $\frac{1}{2}\left(\oM_{n+1}f\pm\oM_{n+1}g\right)=\oM_{n+1}\left(\frac{f\pm g}{2}\right)$.
This proves the claim.

The proof of the second part is done by induction in two steps. First
we show that convexity of $G_{n}$ implies that $G_{n+1}\geq0$. Secondly,
we show that if $G_{n}\geq0$ then $G_{n}$ is convex, by using the
parallelogram identity. By induction this would conclude the proof.

Clearly, we have that $G_{0}(g)=g^{2}$ is positive and convex. Assuming that $g\mapsto G_{n}(g)$ is convex, we have for all $\oM_{n+1}$ by Jensen's inequality (\ref{e131}) that
\[
\forall g\in\mathcal{C}(\mathcal{Z}):\ \oM_{n+1}G_{n}(g)\geq G_{n}(\oM_{n+1}g),
\]
which proves the positivity of $G_{n+1}$ because
\[
G_{n+1}(g) =\oM_{n+1}G_{n}(g)-G_{n}(\oM_{n+1}g) \geq 0.
\]
Next, we show that positivity implies convexity. By the parallelogram
identity, we have for all $f,g\in\mathcal{C}(\mathcal{Z})$ that
\[
G_{n}(f)+G_{n}(g)=2G_{n}\left(\frac{f+g}{2}\right)+2G_{n}\left(\frac{f-g}{2}\right),
\]
which implies
\[
\frac{G_{n}(f)+G_{n}(g)}{2}-G_{n}\left(\frac{f+g}{2}\right)=G_{n}\left(\frac{f-g}{2}\right).
\]
Since $G_{n}\geq0$, we obtain that $G_{n}$ is convex.
\end{proof}
\begin{rem}
The key to the proof of positivity is the equivalence of positivity and
convexity expressed by the parallelogram identity, which suggests that
$G_{n}(g)$ behaves like a norm in a Hilbert space.
\end{rem}

Using Lemma \ref{lem:PositivityGn} and Proposition
\ref{prop:ConnectionGknToGk+1n}, 
we immediately obtain positivity for $G_{n}^{k}(g)$ and $\Gamma_{n}(g)$.

\begin{thm}
\label{thm:PositivityGknandGamman}Let a sequence of Markov operators
${\bf M}_{n}$ be given such that all ${\bf M}_{n}$ commute with the
semigroup ${\bf T}(t)$. Then, for all $n\in\N$, $k\in\left\{ 0,\dots,n\right\} $
we have that \\$G_{n}^{k}(\oM_{n-k},\oM_{n-k-1},\dots,\oM_{1},g)\geq0$. In particular, 
for all $n\in\N$ we have $
\Gamma_{n}(g)\geq0$.
\end{thm}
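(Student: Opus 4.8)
The plan is to run a single induction on the superscript $k$, transporting the positivity $G_n^0 = G_n \geq 0$ established in Lemma \ref{lem:PositivityGn} up the interpolation ladder to $G_n^n = \Gamma_n$ by means of Proposition \ref{prop:ConnectionGknToGk+1n}. The decisive point, and the reason the argument survives the fact that differentiation does not preserve inequalities, is that each $G_n^{k+1}$ arises not as a derivative of a positive quantity but as a limit of $\tfrac{1}{t}$ times an expression that is itself non-negative for every $t>0$.

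First I would pin down the induction hypothesis so that it quantifies over \emph{all} admissible data: for a fixed $k$, assume that $G_n^k(\oM_{n-k},\dots,\oM_1,g)\geq 0$ for every $n\geq k$, every $g\in\rD$, and every sequence of Markov operators commuting with the semigroup $(\oT(t))_{t\geq 0}$. The base case $k=0$ is precisely Lemma \ref{lem:PositivityGn}, which gives $G_n^0(g)=G_n(g)\geq 0$ for all such $n$ and all such sequences.

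For the inductive step I would invoke Proposition \ref{prop:ConnectionGknToGk+1n} in the diagonal form
\[
G_n^{k+1}(\oM_{n-k-1},\dots,\oM_1,g)=\lim_{t\to 0}\ \frac{1}{t}\ G_n^k(\oT(t),\oM_{n-k-1},\dots,\oM_1,g)\,.
\]
For each fixed $t>0$ the expression inside the limit is an instance of $G_n^k$ evaluated on the sequence $(\oT(t),\oM_{n-k-1},\dots,\oM_1)$; since $\oT(t)$ is itself a Markov operator commuting with the whole semigroup, the induction hypothesis applies and yields $G_n^k(\oT(t),\dots,g)\geq 0$. Dividing by $t>0$ preserves the sign, so the quantity under the limit is $\geq 0$ for every $t>0$. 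As the positive cone of $\rC(\rZ)$ is closed, the limit is again non-negative, giving $G_n^{k+1}\geq 0$ and closing the induction. Specializing to $k=n$ and using $G_n^n=\Gamma_n$ from Lemma \ref{lem:PropertiesGkn} then yields $\Gamma_n(g)\geq 0$.

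The main obstacle is conceptual rather than computational, and it is exactly the subtlety flagged in the introduction: one must resist reading $G_n^{k+1}$ as a time-derivative of $G_n^k(\oT(t),\dots)$, an operation that would not respect the ordering. The correct observation is that $G_n^k(\oT(t),\dots,g)$ is \emph{directly} non-negative rather than a difference of terms evaluated at different times, so its difference quotient $\tfrac{1}{t}(\cdot)$ is already non-negative before the limit is taken. Carrying the quantifier over Markov sequences inside the induction hypothesis is what legitimizes the substitution of $\oT(t)$ for the leading operator $\oM_{n-k}$; once that is secured, the remaining content is the elementary fact that a limit of non-negative elements in the Banach lattice $\rC(\rZ)$ is non-negative.
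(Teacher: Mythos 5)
Your proposal is correct and follows essentially the same route as the paper: the paper's proof likewise starts from $G_n^0=G_n\geq 0$ (Lemma \ref{lem:PositivityGn}) and iterates Proposition \ref{prop:ConnectionGknToGk+1n}, substituting $\oT(t)$ (a Markov operator commuting with the semigroup) for the leading argument and passing positivity through the limit $\tfrac 1 t G_n^k(\oT(t),\dots,g)\geq 0$. Your write-up merely makes explicit what the paper compresses into the word ``iteratively'' --- the quantification of the induction hypothesis over all commuting Markov sequences and the closedness of the positive cone of $\rC(\rZ)$ --- which is a faithful, slightly more careful rendering of the same argument.
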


\begin{proof}
By Proposition \ref{prop:ConnectionGknToGk+1n}, we have 
\[
\lim_{t\to0}\ \frac{1}{t}\ G_{n}^{k}({\bf T}(t),\oM_{n-k-1},\dots,\oM_{1},f,g)=G_{n}^{k+1}(\oM_{n-k-1},\dots,\oM_{1},f,g)\,.
\]
Since $G_{n}^{0}=G_{n}$ is positive on the diagonal $f=g$ by Lemma
\ref{lem:PositivityGn}, we also obtain that $G_{n}^{k}(g)\geq0$
iteratively for all $k\in\left\{ 1,\dots,n\right\}$. In particular,
we have $G_{n}^{n}(g)=\Gamma_{n}(g)\geq0$.
\end{proof}

\subsection{Associated energies}\label{subsec:Energies}

By Theorem \ref{thm:PositivityGknandGamman} we know that $\Gamma_n(g)$ is positive for all $g\in\mathcal \cD(\oA^n)$. Hence, we may define the associated energies. For $\mu\in\rP(\rZ)$ being the stationary probability measure of $\oA$, i.e. $\oAs \mu=0$, we inductively define the bilinear forms
\begin{align*}
\E_{0}(f,g) & =\langle f\cdot g,\mu\rangle-\langle f,\mu\rangle\langle g,\mu\rangle\\
\E_{n+1}(f,g) & =-\E_{n}(\oA f,g)-\E_{n}(f,\oA g)\,,
\end{align*}
whenever the right-hand side is well-defined. In particular, we have
\begin{align*}
\E_{1}(f,g) & =-\langle\oA f\cdot g,\mu\rangle - \langle \oA g\cdot f,\mu\rangle\, ,
\end{align*}
which is usually called the \textit{Dirichlet form} associated with the Markov generator $\oA$. Moreover, we introduce the notation $\E_n(g):=\E_n(g,g)$.
\begin{lem}
\label{lem:RelationGammanAndEnergies}For all $n\geq 1$ we have that
\[
\E_{n}(f,g)=\langle\Gamma_{n}(f,g),\mu\rangle.
\]
In particular, we have that $\E_n(g)\geq 0$ for all $n\in\N$.
\end{lem}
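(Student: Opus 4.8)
The plan is an induction on $n$ in which the stationarity of $\mu$, namely $\oAs\mu=0$ — equivalently $\la\oA h,\mu\ra=0$ for every $h$ in the relevant domain — does all the work. The guiding observation is that the energies $\E_n$ and the bilinear forms $(f,g)\mapsto\la\Gamma_n(f,g),\mu\ra$ obey \emph{the same} recursion in $n$: the square-field recursion carries an extra leading term $\oA\Gamma_n(f,g)$, but this term is annihilated once integrated against the invariant measure. Throughout I take $f,g\in\rD$, so that all the $\Gamma_n$, the energies, and the domain memberships used below are well-defined.

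For the base case $n=1$ I would compute both sides directly. Unfolding the definition gives $\E_1(f,g)=-\E_0(\oA f,g)-\E_0(f,\oA g)$; since $\la\oA f,\mu\ra=\la\oA g,\mu\ra=0$, the centering products in $\E_0$ drop out and $\E_1(f,g)=-\la\oA f\cdot g,\mu\ra-\la f\cdot\oA g,\mu\ra$. On the other hand $\la\Gamma_1(f,g),\mu\ra=\la\oA(f\cdot g),\mu\ra-\la f\cdot\oA g,\mu\ra-\la g\cdot\oA f,\mu\ra$, whose first term vanishes by stationarity; the two expressions coincide. It is worth noting that the identity genuinely fails at $n=0$, because $\E_0$ is the centered covariance whereas $\la\Gamma_0(f,g),\mu\ra=\la f\cdot g,\mu\ra$; the centering correction $\la f,\mu\ra\la g,\mu\ra$ is exactly what gets killed as soon as $\oA$ hits one slot, which is why the statement begins at $n=1$.

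For the inductive step I would assume $\E_n(f,g)=\la\Gamma_n(f,g),\mu\ra$ for all $f,g$ and apply it to the two terms of $\E_{n+1}(f,g)=-\E_n(\oA f,g)-\E_n(f,\oA g)$, obtaining $-\la\Gamma_n(\oA f,g),\mu\ra-\la\Gamma_n(f,\oA g),\mu\ra$. Integrating the square-field recursion $\Gamma_{n+1}(f,g)=\oA\Gamma_n(f,g)-\Gamma_n(\oA f,g)-\Gamma_n(f,\oA g)$ against $\mu$ makes the leading term $\la\oA\Gamma_n(f,g),\mu\ra$ vanish by stationarity, leaving precisely the same two terms. This closes the induction.

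There is no genuine obstacle; the only thing to keep straight is the repeated discarding of the top-order term via $\la\oA h,\mu\ra=0$ at each stage, which is exactly the role of invariance and is legitimate since we work in $\rD\subset\rD(\oA^\infty)$. For the final assertion I would split on $n$: for $n\geq 1$ we have $\E_n(g)=\la\Gamma_n(g),\mu\ra\geq 0$ because $\Gamma_n(g)\geq 0$ by Theorem~\ref{thm:PositivityGknandGamman} and $\mu\geq 0$; for $n=0$ the quantity $\E_0(g)=\la g^2,\mu\ra-\la g,\mu\ra^2$ is the variance of $g$ under the probability measure $\mu$ and hence nonnegative by Jensen's inequality \eqref{e131} (indeed it equals $G_1(\oM_\mu,g)$ for the averaging Markov operator $\oM_\mu g:=\la g,\mu\ra\,\one$, which is $\geq 0$ by Theorem~\ref{thm:PositivityGknandGamman}).
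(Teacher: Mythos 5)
Your proof is correct and follows essentially the same route as the paper's: induction on $n$, with the base case $n=1$ computed directly and the inductive step closed by integrating the square-field recursion against $\mu$ and discarding the term $\la\oA\Gamma_n(f,g),\mu\ra$ via $\oAs\mu=0$, then deducing positivity for $n\geq 1$ from Theorem~\ref{thm:PositivityGknandGamman} together with $\mu\geq 0$ and for $n=0$ from Jensen's inequality. Your side remarks (why the identity starts at $n=1$, and the identification of $\E_0(g)$ with $G_1(\oM_\mu,g)$) are correct additions but not needed.
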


\begin{proof}
For $n=1$, we have
\begin{align*}
\E_{1}(f,g) & =-\E_{0}(\oA f,g)-\E_{0}(f,\oA g)=\\
 & =-\langle\oA f\cdot g,\mu\rangle+\langle\oA f,\mu\rangle\langle g,\mu\rangle-\langle f\cdot\oA g,\mu\rangle+\langle f,\mu\rangle\langle\oA g,\mu\rangle\\
 & =-\langle\oA f\cdot g+f\cdot\oA g,\mu\rangle=\langle\Gamma_{1}(f,g),\mu\rangle,
\end{align*}
where we have used that $\oAs \mu=0$. This proves the claim for
$n=1$.

Assuming that the claim holds for $n\geq1$ and using the recursion formula, we obtain for $n+1$ that
\begin{align*}
\E_{n+1}(f,g) & =-\E_{n}(\oA f,g)-\E_{n}(f,\oA g)=\\
 & =-\langle\Gamma_{n}(\oA f,g),\mu\rangle-\langle\Gamma_{n}(\oA f,g),\mu\rangle=\\
 & =\langle\oA\Gamma_{n}(f,g)-\Gamma_{n}(\oA f,g)-\langle\Gamma_{n}(\oA f,g),\mu\rangle=\langle\Gamma_{n+1}(f,g),\mu\rangle,
\end{align*}
where we again have used $\oAs \mu=0$. This proves the desired
relation.

Since we have $\E_{n}(g)=\langle\Gamma_{n}(g),\mu\rangle$, positivity follows for $n\geq 1$ by Theorem \ref{thm:PositivityGknandGamman} by using that $\mu \geq 0$. Moreover, $\E_0(g)\geq0$ by Jensen's inequality. 
\end{proof}
One can easily derive an explicit formula for $\E_{n}$.

\begin{prop}
\label{prop:PropertiesEnergies}We have the following explicit form for $n\geq 1$
\begin{equation}
\E_{n}(f,g)=(-1)^{n}\sum_{j=0}^{n}\binom{n}{j}\langle\oA^{n-j}f\cdot\oA^{j}g,\mu\rangle\,.\label{eq:ExplicitFormulae_n}
\end{equation}
\end{prop}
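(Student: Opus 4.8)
The plan is to prove the explicit formula \eqref{eq:ExplicitFormulae_n} by induction on $n$, using the recursion $\E_{n+1}(f,g) = -\E_n(\oA f,g) - \E_n(f,\oA g)$ that defines the energies together with the relation $\E_n(f,g)=\langle\Gamma_n(f,g),\mu\rangle$ from Lemma \ref{lem:RelationGammanAndEnergies}. The structure of the claimed formula is exactly what one expects from repeatedly applying a ``Leibniz-type'' operation $f\mapsto \oA f$, $g\mapsto\oA g$, so the binomial coefficients and the sign $(-1)^n$ should emerge naturally from a Pascal-triangle argument.

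First I would check the base case $n=1$: the formula reads $\E_1(f,g)=-\langle\oA f\cdot g + f\cdot\oA g,\mu\rangle$, which matches the displayed expression for $\E_1$ given just before Lemma \ref{lem:RelationGammanAndEnergies}. For the inductive step, I would assume \eqref{eq:ExplicitFormulae_n} holds for some $n\geq1$ and substitute it into the recursion. The key computation is
\begin{align*}
\E_{n+1}(f,g) &= -\E_n(\oA f,g)-\E_n(f,\oA g)\\
&= -(-1)^n\sum_{j=0}^n\binom{n}{j}\langle \oA^{n-j+1}f\cdot\oA^j g,\mu\rangle\\
&\quad -(-1)^n\sum_{j=0}^n\binom{n}{j}\langle \oA^{n-j}f\cdot\oA^{j+1}g,\mu\rangle.
\end{align*}
Pulling out $(-1)^{n+1}$ and reindexing the second sum by $j\mapsto j-1$, the two sums combine into a single sum over $j=0,\dots,n+1$ whose coefficient of $\langle\oA^{n+1-j}f\cdot\oA^j g,\mu\rangle$ is $\binom{n}{j}+\binom{n}{j-1}=\binom{n+1}{j}$ by Pascal's rule (with the convention that $\binom{n}{-1}=\binom{n}{n+1}=0$ handles the boundary terms). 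This yields precisely \eqref{eq:ExplicitFormulae_n} with $n$ replaced by $n+1$, completing the induction.

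The argument is essentially a bookkeeping exercise, so I do not anticipate a genuine conceptual obstacle; the only point requiring care is the reindexing of the two sums and verifying that the boundary terms ($j=0$ and $j=n+1$) receive the correct coefficients, which the Pascal convention handles cleanly. One should also make sure, at each step, that all terms are well-defined, i.e. that $f,g\in\rD(\oA^\infty)$ (or at least lie in sufficiently many domains $\rD(\oA^k)$) so that every expression $\langle\oA^{n-j}f\cdot\oA^j g,\mu\rangle$ makes sense and the recursion may be applied; this is guaranteed by the standing assumption that we work with functions in $\rD(\oA^\infty)$. The use of $\oAs\mu=0$ is already baked into the validity of the recursion via Lemma \ref{lem:RelationGammanAndEnergies}, so it need not be invoked again explicitly in the inductive step.
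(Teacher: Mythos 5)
Your proof is correct and takes essentially the same route as the paper: induction on $n$, base case $n=1$ via the displayed Dirichlet-form expression, and Pascal's rule $\binom{n+1}{j}=\binom{n}{j-1}+\binom{n}{j}$ in the inductive step — the paper merely runs the identical algebra in the reverse direction, expanding the claimed $(n+1)$-sum and recombining it into $-\E_{n}(\oA f,g)-\E_{n}(f,\oA g)$. One small attribution slip (not a gap): $\oAs\mu=0$ is not what validates the recursion, which is a definition; it is what validates your base case, i.e.\ the formula $\E_{1}(f,g)=-\langle\oA f\cdot g+f\cdot\oA g,\mu\rangle$, since it kills the terms $\langle\oA f,\mu\rangle\langle g,\mu\rangle$ coming from $\E_{0}$.
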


\begin{proof}
We observe that for $n=1$, the right-hand side
is given by \\$-\left(\langle\oA f\cdot g,\mu\rangle + \langle f\cdot\oA
  g,\mu\rangle\right)$, 
which is $\E_{1}$. Assuming that the formula holds true for $n\in\N$,
we obtain, by using $\binom{n+1}{j}=\binom{n}{j-1}+\binom{n}{j}$
that
\begin{align*}
 & (-1)^{n+1}\sum_{j=0}^{n+1}\binom{n+1}{j}\langle\oA^{n+1-j}f\cdot\oA^{j}g,\mu\rangle=\\
 & =(-1)^{n+1}\left\{ \sum_{j=1}^{n+1}\binom{n+1}{j}\langle\oA^{n+1-j}f\cdot\oA^{j}g,\mu\rangle+\langle\oA^{n+1}f\cdot g,\mu\rangle\right\} \\
 & =(-1)^{n+1}\left\{ \sum_{j=1}^{n+1}\binom{n}{j-1}\langle\oA^{n+1-j}f\cdot\oA^{j}g,\mu\rangle+\binom{n}{j}\langle\oA^{n+1-j}f\cdot\oA^{j}g,\mu\rangle+\langle\oA^{n+1}f\cdot g,\mu\rangle\right\} \\
 & =(-1)^{n+1}\left\{ \sum_{k=0}^{n}\binom{n}{k}\langle\oA^{n-k}f\cdot\oA^{k+1}g,\mu\rangle+\sum_{j=0}^{n}\binom{n}{j}\langle\oA^{n+1-j}f\cdot\oA^{j}g,\mu\rangle\right\} \\
 & =-\left\{ \E_{n}(f,\oA g)+\E_{n}(\oA f,g)\right\} =\E_{n+1}(f,g),
\end{align*}
which proves the claimed formula.
\end{proof}
\begin{rem}
Similar explicit expression like
\eqref{eq:ExplicitFormulae_n} 
can also be derived for $\Gamma_{n}$ and $G_{n}^{k}$.
\end{rem}

Considering the solution $g$ of $\dot g=\oA g$, we may define the energy along solutions given by
\[
\e_n(t):=\E_n(g(t),g(t))\,.
\]
The next theorem shows that all energies $\e_n$ decay along solutions and are convex in $t>0$.

\begin{thm}
\label{thm:PropertiesEnergies}
Let $n\in\N$. We have the following properties for
$\e_{n}$:
\begin{enumerate}
\item Along solutions $g=g(t)$ of $\dot{g}=\oA g$ with $g(0)=g_0 \in \rD(\oA^\infty)$ the trajectory  
$t\mapsto \e_{n}(t)$ is differentiable and we have  
\begin{equation}\label{e101}
\frac{\d}{\d t}\e_{n}(t)=-\e_{n+1}(t)\,.
\end{equation}
In particular, $t\mapsto \e_{n}(t)$ is monotonically decreasing and convex.
\item For $n\geq 1$ we have $\lim_{t\to\infty} \e_{n}(t) = 0$ and $\int_0^\infty \e_{n+1}(t)~\d t = \e_{n}(0)$.
\end{enumerate}
\end{thm}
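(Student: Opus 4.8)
The plan is to prove (1) by differentiating $\e_n(t)=\E_n(g(t),g(t))$ along the trajectory and invoking the recursive definition of $\E_{n+1}$, and then to read off (2) from the resulting differential identity \eqref{e101} together with the positivity established in Theorem \ref{thm:PositivityGknandGamman}.

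First I would fix $g_0\in\rD(\oA^\infty)$ and write $g(t)=\oT(t)g_0$. Since $\oT(t)$ preserves $\rD(\oA^\infty)$ and commutes with $\oA$, for every $k\in\N$ the curve $t\mapsto\oA^k g(t)=\oT(t)\oA^k g_0$ is continuously differentiable in $\rC(\rZ)$ with derivative $\oA^{k+1}g(t)$. Using the explicit formula from Proposition \ref{prop:PropertiesEnergies},
\[
\e_n(t)=(-1)^n\sum_{j=0}^n\binom{n}{j}\langle\oA^{n-j}g(t)\cdot\oA^j g(t),\mu\rangle,
\]
each summand is the composition of two such differentiable curves with the bounded bilinear map $(u,v)\mapsto\langle u\cdot v,\mu\rangle$ (bounded because multiplication is continuous on the Banach algebra $\rC(\rZ)$ and $\mu$ is a bounded functional), so the product rule applies termwise. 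Collecting terms — equivalently, differentiating the bilinear form directly to get $\frac{\d}{\d t}\e_n(t)=\E_n(\oA g(t),g(t))+\E_n(g(t),\oA g(t))$ — and applying the recursion $\E_{n+1}(f,g)=-\E_n(\oA f,g)-\E_n(f,\oA g)$ yields $\frac{\d}{\d t}\e_n(t)=-\E_{n+1}(g(t),g(t))=-\e_{n+1}(t)$, which is \eqref{e101}. Monotone decrease follows because $\e_{n+1}(t)=\langle\Gamma_{n+1}(g(t)),\mu\rangle\geq0$ by Theorem \ref{thm:PositivityGknandGamman} (using $\mu\geq0$); convexity follows because $\frac{\d^2}{\d t^2}\e_n(t)=\e_{n+2}(t)\geq0$ for the same reason.

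For (2), note that by (1) every $\e_m$, $m\in\N$, is non-negative and non-increasing, so the limit $\e_m(\infty):=\lim_{t\to\infty}\e_m(t)\geq0$ exists. Integrating \eqref{e101} over $[0,T]$ and letting $T\to\infty$ gives, for every $m\in\N$,
\[
\int_0^\infty\e_{m+1}(t)\,\d t=\e_m(0)-\e_m(\infty)\leq\e_m(0)<\infty.
\]
Hence every $\e_k$ with $k\geq1$ is a non-negative, non-increasing, integrable function on $[0,\infty)$, and such a function must satisfy $\e_k(\infty)=0$ (otherwise, if $\e_k(\infty)=L>0$, then $\int_0^\infty\e_k\geq LT\to\infty$). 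This proves $\lim_{t\to\infty}\e_n(t)=0$ for $n\geq1$. Substituting $\e_n(\infty)=0$ into the displayed identity then yields $\int_0^\infty\e_{n+1}(t)\,\d t=\e_n(0)$ for $n\geq1$.

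The only delicate point is the interchange of differentiation with the (unbounded) bilinear forms $\E_n$ in the first step; this is exactly why the theorem restricts to $g_0\in\rD(\oA^\infty)$, so that each $\oA^k g(t)$ remains differentiable in $\rC(\rZ)$ and the termwise product rule is legitimate. Once \eqref{e101} is in hand, the remaining assertions are elementary and rest entirely on the positivity $\Gamma_n(g)\geq0$ proved earlier.
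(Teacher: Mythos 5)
Your proposal is correct and follows essentially the same route as the paper: part (1) is obtained by termwise differentiation of the explicit formula from Proposition \ref{prop:PropertiesEnergies} together with the recursion $\E_{n+1}(f,g)=-\E_n(\oA f,g)-\E_n(f,\oA g)$ and positivity of $\Gamma_{n+1}$, $\Gamma_{n+2}$, and part (2) by integrating \eqref{e101}, using monotonicity and boundedness to extract the limit, and then concluding that a non-negative, non-increasing, integrable function must vanish at infinity. Your added care about justifying the termwise product rule (boundedness of the bilinear map $(u,v)\mapsto\langle u\cdot v,\mu\rangle$ and invariance of $\rD(\oA^\infty)$ under $\oT(t)$) is a welcome refinement of a step the paper treats implicitly, but it does not change the argument.
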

\begin{proof}
We prove both claims separately. For the first claim, we exploit the explicit expression of $\E_{n}$.
Then, we have for all $n\in\N$
\begin{align*}
\frac{\d}{\d t}\e_{n}(t) & =(-1)^{n}\sum_{j=0}^{n}\binom{n}{j}\langle\oA^{n-j}\dot{g}\cdot\oA^{j}g,\mu\rangle+(-1)^{n}\sum_{j=0}^{n}\binom{n}{j}\langle\oA^{n-j}g\cdot\oA^{j}\dot{g},\mu\rangle\\
 & =(-1)^{n}\sum_{j=0}^{n}\binom{n}{j}\langle\oA^{n-j}\oA g\cdot\oA^{j}g,\mu\rangle+(-1)^{n}\sum_{j=0}^{n}\binom{n}{j}\langle\oA^{n-j}g\cdot\oA^{j}\oA g,\mu\rangle=\\
 & =\E_{n}(\oA g,g)+\E_{n}(g,\oA g)=-\E_{n+1}(g(t)) = -\e_{n+1}(t).
\end{align*}
Since $\E_{n+1}$ is positive $\e_n$ decays.  Moreover, the above formula provides that $\frac{\d^2}{\d t^2}\e_{n}(t)=\e_{n+2}(t)\geq 0$, which shows that $\e_n$ is convex. This proves the first claim.

From (\ref{e101}) we have for all $n\in\N$ that
\begin{align*}
\e_{n}(0) - \e_{n}(t) = \int_0^t \e_{n+1}(t') ~\d t'
\end{align*}
Since the left-hand side is monotone and bounded, the limit 
 $\lim_{t\to\infty}\e_{n}(t) =: \e_{n}(\infty)$ exists and we have
\begin{equation}\label{e102}
\e_{n}(0) - \e_{n}(\infty) = \int_0^\infty \e_{n+1}(t') ~\d t'
\end{equation}

This shows the integrability of $t\mapsto \e_{n+1}(t)$ on $[0,\infty)$.  
Since $\e_{n+1}(t)$ is monotone and positive it has to tend to 0. This proves
the second part. 
\end{proof}

In the next section, we show that an explicit quantitative convergence rate can be derived for $\oA$ being a normal operator. For completeness, we recall that $\e_k$ decays exponentially for $k\in\{1,\dots,n\}$ under the assumption of a Poincare-type inequality
$\E_{n+1}(g) \geq c~ \E_{n}(g) $, for fixed $n \geq 1$ with some constant $c>0$.
\begin{cor}\label{cor:ExponentialDecay}
Assume, we have for some fixed $n\in\N$ a Poincare-type inequality
$\E_{n+1}(g) \geq c \,\E_{n}(g) $ with some constant $c>0$. Then, 
$t\mapsto \e_{k}(t)$ decays exponentially in time for all $k \in\{1,\dots,
n\}$ with exponential rate $\exp(-c t)$.
\end{cor}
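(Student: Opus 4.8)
The plan is to obtain the decay for the top index $k=n$ directly from the Poincaré-type inequality together with a Gronwall argument, and then to propagate it to the lower indices $k<n$ by a downward induction that integrates the already-established decay.

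First I would evaluate the assumed inequality $\E_{n+1}(g)\geq c\,\E_{n}(g)$ along the solution $g=g(t)$, obtaining $\e_{n+1}(t)\geq c\,\e_{n}(t)$ for all $t\geq 0$. Combining this with the identity $\frac{\d}{\d t}\e_{n}(t)=-\e_{n+1}(t)$ from Theorem \ref{thm:PropertiesEnergies} yields the differential inequality
\[
\frac{\d}{\d t}\e_{n}(t)\leq -c\,\e_{n}(t).
\]
Setting $u(t):=\e_{n}(t)\exp(ct)$ and differentiating shows $u'(t)\leq 0$, so $u$ is non-increasing and hence $\e_{n}(t)\leq \e_{n}(0)\exp(-ct)$. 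This is the classical Gronwall conclusion and settles the case $k=n$.

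Next I would treat $k\in\{1,\dots,n-1\}$ by downward induction. By Theorem \ref{thm:PropertiesEnergies}(2) we have $\lim_{t\to\infty}\e_{k}(t)=0$ for $k\geq 1$, and each $\e_{k+1}$ is integrable on $[0,\infty)$. Integrating $\frac{\d}{\d s}\e_{k}(s)=-\e_{k+1}(s)$ from $t$ to $\infty$ therefore gives the representation
\[
\e_{k}(t)=\int_{t}^{\infty}\e_{k+1}(s)\,\d s.
\]
Assuming inductively that $\e_{k+1}(s)\leq C_{k+1}\exp(-cs)$ for some constant $C_{k+1}>0$, I would insert this bound and use $\int_{t}^{\infty}C_{k+1}\exp(-cs)\,\d s=\frac{C_{k+1}}{c}\exp(-ct)$, obtaining $\e_{k}(t)\leq \frac{C_{k+1}}{c}\exp(-ct)$. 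This closes the induction with $C_{n}:=\e_{n}(0)$ and $C_{k}:=C_{k+1}/c$, and proves exponential decay with rate $\exp(-ct)$ for every $k\in\{1,\dots,n\}$.

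There is no serious obstacle here; the argument is essentially routine once the two ingredients are assembled. The only point requiring attention is that the Poincaré-type inequality controls the pair $(\E_{n},\E_{n+1})$ only at the single fixed level $n$, so the lower energies cannot be handled by a direct Gronwall estimate and must instead be recovered by integrating the decay of the next-higher energy downward — which is exactly what the vanishing limits $\e_{k}(\infty)=0$ and the integrability furnished by Theorem \ref{thm:PropertiesEnergies} make possible.
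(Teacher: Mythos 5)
Your proposal is correct and follows essentially the same route as the paper: a Gronwall argument at the top level $k=n$, followed by downward iteration in which the decay of $\e_{k+1}$ is integrated on $[t,\infty)$ using the vanishing limit $\e_{k}(\infty)=0$ from Theorem \ref{thm:PropertiesEnergies}. The only cosmetic difference is that the paper integrates $\dot{\e}_{k}=-\e_{k+1}$ from $t$ to $T$ and then lets $T\to\infty$, whereas you state the equivalent representation $\e_{k}(t)=\int_{t}^{\infty}\e_{k+1}(s)\,\d s$ directly (and you cite the correct theorem for $\e_k(\infty)=0$, where the paper's reference to Theorem \ref{thm:PolynomialDecayEnergy} appears to be a typo).
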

\begin{proof}
By assumption, we get that $\dot{\e}_{n}(t)\leq-c~ \e_{n}(t)$, which
implies $\e_{n}(t)\leq\e_{n}(0)\mathrm{e}^{-ct}$, the exponential
decay of $\e_{n}$. Integrating $\dot{\e}_{n-1}(t)=-\e_{n}(t)$ in time, we conclude
\[
\e_{n-1}(T)-\e_{n-1}(t)=\int_{t}^{T}-\e_{n}(s)\d
s\geq-\int_{t}^{T}\e_{n}(0)\mathrm{e}^{-c s}\d
s=\frac{\e_{n}(0)}{c}\left(\mathrm{e}^{-c T}-\mathrm{e}^{-c t}\right)\,.
\]
Taking the limit $T\to\infty$ and using that $\e_{n-1}(T=\infty)=0$
by Theorem \ref{thm:PolynomialDecayEnergy}, we conclude that
\[
\e_{n-1}(t)\leq\frac{\e_{n}(0)}{c}\mathrm{e}^{-c t}\,.
\]
Clearly this can be iterated to show that $t\mapsto\e_{k}(t)$ decays
exponentially for all $k\in\left\{ 1,\dots,n\right\} $.
\end{proof}

%\begin{rem}
%The decay of $\E_{n}(g(t))$ is connected with the spectral gap of $\oA^n$. If this is larger then 1 (may be after some renormalization), then the  spectral gap of $\oA^n$ becomes larger with $n$.  Therefore, the larger $n$ is, the easier it might be to prove some inequality like $\E_{n+1}(g) \geq c \E_{n}(g)$.
%\end{rem}

\begin{rem}
We remark that the presented theory is valid for arbitrary Markov
generators. However, there are cases for which the discussed inequalities are
trivially satisfied but do not contain any
information.   

The first obvious case is when parts of the spectrum lie on the imaginary axis
(without considering 0) which is true for derivations. 
A Markov generator $\oA$ is called {\it derivation}, if $\rD(\oA)$ is a
subalgebra of $\rC$, $\eins \in \rD(\oA)$, $\oA \eins = 0$ and
$\oA(f \cdot g) = f \cdot \oA g + g \cdot \oA f,~f,g \in \rD(\oA)$
holds. Then we have $\Gamma_1(f,g)
\equiv 0$ what implies $\Gamma_n(f,g)\equiv 0$ for all $n \geq 1$.

A second case is a degenerated generator $\oA$.  A Markov
generator $\oA$ is called {\it degenerate} if there exists a closed set $B
\subset 
\rZ$ and $(\oA g)(z)=0$ holds for all $z \in B$ and $g \in \rD(\oA)$. 
Obviously, any measure $\mu \in \rP$ with $\mu(B)=1$ is a stationary measure
for $\oA^*$. 
Although in this case $\Gamma_n \not\equiv 0$ holds, we obtain 
$\E_n(f,g)=\la \Gamma_n(f,g) , \mu \ra =0$ for all $n \geq 1$.
\end{rem}

\section{Decay rate for energies in Hilbert space}

So far the decay of the energies $t\mapsto\e_{n}(t)=\E_{n}(g(t))$ for a general Markov generator $\oA$
along solution is not quantified. It is well-known that whenever the
generator $\oA$ has a spectral gap, i.e. the largest non-trivial
eigenvalue has strictly negative real part, then exponential decay
of the solution and, hence, also for energies should hold in theory. For a general
Markov generator $\oA$ the real-parts of its eigenvalues may accumulate
in zero, i.e. there is no spectral gap and no exponential decay. Moreover,
in practice it is rather difficult to compute the spectral gap for
a given Markov generator.

As we will see it is possible to derive polynomial ($n$-dependent)
decay for all energies $t\mapsto\e_{n}(t)$ for general $\oA$, which satisfy
an operator-theoretic normality condition. Because \textit{normality}
(like \textit{self-adjointness}) are terms for operators in Hilbert spaces,
we first lift $\oA$ to the natural Hilbert space $\L^{2}(\mu)$.
Then in Section \ref{subsec:Log-convexity-in-n},
we derive log-convexity for the sequence $(\E_{n}(g))_{n\in\N}$ which enables to
derive conclusion for $t\mapsto\e_{n}(t)$.

\subsection{Hilbert space embedding of $\oA$}

The generator $(\oA,\mathcal{D}(\oA))$ with semigroup ${\bf T}(t)$
with stationary measure $\mu\in\mathcal{P}(\mathcal{Z})$ is
defined on $\mathcal{C}(\mathcal{Z})$. 
Certain important properties, however, are naturally to be studied in a
Hilbert space.  

As usual we define the real
separable Hilbert space $\L^{2}(\mu)$ as the completion
of $\mathcal{C}(\mathcal{Z})$ 
with the norm (written $\|g\|_\mu$)
induced by the scalar product
\[
(f,g)_{\mu}:=\langle f\cdot g,\mu\rangle = \int_\rZ f(z)g(z)p(\d z) \,.
\]
By definition $\mathcal{C}(\mathcal{Z})\subset\L^{2}(\mu)$ is
dense\footnote{The choice of $\L^{2}(\mu)$ instead of any $\L^{2}(p)$ with an
  arbitrary positive 
measure $p$, has good reasons. In order to transfer
the theory in $\rC$ to $\L^2$, certain properties should be preserved as  boundedness and strong-continuity of
$\oT(t)$. We note that in general a bounded operator $\oM$ on $\mathcal{C}(\mathcal{Z})$
is not bounded on $\L^{2}(p)$. Take for example $\mathcal{Z}=[0,1]$,
$\oM g(z):=g(z_{0})$ and $p$ the Lebesgue measure on $[0,1]$. The distinguished role of the stationary measure $\mu$ becomes clear when considering, for example, symmetry of $\oA$, i.e.  $(\oA f,g)_{p} = (f,\oA g)_{p}$ or, equivalently,
$\la g \cdot \oA f,p \ra = \la f \cdot \oA g,p \ra $. Setting $f=\eins$ we conclude $\oA^* p=0$, so $p$ must then be a stationary measure of $\oA$.\label{foot1}}.
We implicitly use this fact in the following, writing for example
$(f,g)_{\mu}=\langle f\cdot g,\mu\rangle$, 
although this identity is valid only for $f,g \in \rC(\rZ) \subset \L^2(\mu)$.

Recall, that if $\mu$ is the invariant measure of the Markov operator $\oM$,
i.e. $\oM^{*}\mu=\mu$, then $\oM$ can be boundedly extended to $\L^{2}(\mu)$.
To see this, we observe for $g\in\L^{2}(\mu)$ that
\[
  \|\oM g\|_\mu^2 =
(\oM g,\oM g)_{\mu}=\langle\left(\oM g\right)^{2},\mu\rangle\leq\langle\oM g^{2},\mu\rangle=\langle g^{2},\oM^{*}\mu\rangle=\langle g^{2},\mu\rangle=(g,g)_{\mu},
\]
where we have used Jensen's inequality and that $\oM^{*}\mu=\mu$.
Hence, $\oM$ is bounded on $\L^{2}(\mu)$ with constant 1. In the
following we also denote the operator on the larger space $\L^{2}(\mu)$
by the same symbol.

Similarly, a strongly-continuous semigroup of operators $({\bf T}(t))_{t\geq 0}$
on $\mathcal{C}(\mathcal{Z})$ with invariant measure
$\mu\in\mathcal{P}(\mathcal{Z})$
can be extended to the space $\L^{2}(\mu)$. Clearly, then the family
$({\bf T}(t))_{t\geq 0}$ is a semigroup of bounded operators on $\L^{2}(\mu)$. Moreover, $({\bf T}(t))_{t\geq 0}$ is also strongly-continuous
in $\L^2(\mu)$, because we have
  \begin{align*}
  \|\oT(t) &g - g\|_\mu^2 =
  \la (\oT(t) g - g)\cdot (\oT(t) g - g),\mu \ra =
  \la (\oT(t) g)^2,\mu \ra - \la  g^2,\mu \ra 
  - 2\la g \cdot (\oT(t) g - g),\mu \ra \\&\leq
  \la \oT(t) g^2,\mu \ra - \la  g^2,\mu \ra 
  - 2\la g \cdot (\oT(t) g - g),\mu \ra =
    \la \oT(t) g^2 - g^2,\mu \ra - 2\la g \cdot (\oT(t) g - g),\mu \ra.
    \end{align*}
The right hand side tends to 0 for $t \to 0$ due to the 
strong continuity of $\oT(t)$ in $\rC$. The generator of $({\bf T}(t))_{t\geq 0}$ on $\L^{2}(\mu)$ is
(with a slight abuse of notation) also denoted by
$\left(\oA,\mathcal{D}(\oA)\right)$, $\mathcal{D}(\oA)\subset\L^{2}(\mu)$.
It is closed and densely defined on $\L^{2}(\mu)$. There will be
no confusion with the notation because in this section we are only
interested in the Hilbert space $\L^{2}(\mu)$-version. The generator
$\oA$ coincides with the original generator on $\mathcal{C}(\mathcal{Z})$
because the semigroups coincide there. Since ${\bf T}(t)$ is a contraction,
the spectrum of $\oA$ is located on the left-hand side of the complex
plane.

\subsection{Normality of ${\bf A}$ and the connection to $\E_{n}$}

To quantify the decay rate of the energies $\e_{n}$, we assume that
${\bf A}$ is a normal operator. For this, we first define its
$\L^{2}(\mu)$-adjoint
$\left(\oAsh ,\mathcal{D}(\oAsh )\right)$ as usual\footnote{Note that we use a star ($\star$) to distinguish the Hilbert-space
adjoint $\oAsh $ with the Banach space dual $\oAs $,
where the latter is defined on $\mathcal{C}^{*}(\mathcal{Z})$. } (we refer e.g. to \cite{Schm12USOHS} for unbounded operators on Hilbert
spaces). The
operator $\oAsh $ is well defined since ${\bf A}$ is densely
defined, and moreover, it is closed. In the following we make the
following assumption.
\begin{assumption*}
The operator $({\bf A},\mathcal{D}({\bf A}))$ is a normal operator on $\L^{2}(\mu)$,
meaning that $\mathcal{D}({\bf A})=\mathcal{D}(\oAsh )$
and $\|{\bf A}g\|_\mu=\|\oAsh g\|_\mu$ for all $g\in\mathcal{D}({\bf A})=\mathcal{D}(\oAsh )$.
\end{assumption*}
Since $\oA$ is closed, we have $\oAsh {\bf A}={\bf A}\oAsh $.
We note that normality is a substantially more general concept than
self-adjointness,  
which would mean that $\mathcal{D}({\bf A})=\mathcal{D}(\oAsh )$
and ${\bf A}=\oAsh $. In Section \ref{sec:Discussion-of-normality},
we discuss what normality of ${\bf A}$ means for the original operator
on $\mathcal{C}(\mathcal{Z})$. In particular we provide an operator-theoretic
version for normality with respect to $\mathcal{C}(\mathcal{Z})$.
Moreover, we show with an example that without the assumption log-convexity may fail, which is our main ingredient to derive polynomial decay of the energy.

For the normal operator ${\bf A}$, we define the operator
\begin{align}
\label{eq:C}
{\bf C}:=-\left({\bf A}+\oAsh \right),\qquad\mathcal{D}({\bf C})=\mathcal{D}({\bf A})=\mathcal{D}(\oAsh )\,.
\end{align}
In particular, normality of ${\bf A}$ implies that $\mathcal{D}({\bf C})=\mathcal{D}({\bf A})$
and that ${\bf C}$ is densely defined. Moreover, we see that for
all $f,g\in\mathcal{D}({\bf C})=\mathcal{D}({\bf A})$, we have 
\[
(f,{\bf C}g)_{\mu}=-(f,\left({\bf A}+\oAsh \right)g)_{\mu}=-(f,{\bf A}g)_{\mu}-({\bf A}f,g)_{\mu},
\]
which shows that ${\bf C}$ is symmetric and, hence, also closable.
In general ${\bf C}$ is, as a sum of a normal operator with its adjoint,
not necessarily closed\footnote{As an example consider ${\bf A}=\frac{\d}{\d x}$ on $\L^{2}(\R)$.
Then ${\bf A}$ is normal on $\mathcal{D}({\bf A})=\H^{1}(\R)$ because
$\oAsh =-\frac{\d}{\d x}=-{\bf A}$. But ${\bf A}+\oAsh ={\bf 0}$,
which is not closed as defined on $\H^{1}(\R)$. See e.g. \cite{ArTr20EPDI} for recent results on the variety of domain intersection for an operator with its adjoint}. However classical spectral theory for normal operators provides that the closure
$\overline{{\bf C}}=-\overline{\left({\bf A}+\oAsh \right)}$
is self-adjoint, see e.g. \cite{Schm12USOHS}. In fact the
complex spectral family for a (in general unbounded) normal operator
can be split into two real spectral families, which provides a decomposition
into a real and imaginary part of the normal operator that are essentially
self-adjoint. Here, we are not interested in spectral properties of
${\bf A}$, and, in particular, we do not make any assumptions on
the spectrum of ${\bf A}$, we only use the fact that ${\bf C}$ is
essentially self-adjoint.

Moreover, ${\bf C}$ as well as its closure $\overline{{\bf C}}$
is positive in the form sense, because
\[
(g,{\bf C}g)_{\mu}=-(g,\oA g)_{\mu}-(g,\oA g)_{\mu}=-2\langle g\cdot\oA g,\mu\rangle=\E_{1}(g,g)\geq0\,,
\]
holds for all $g\in\mathcal{D}({\bf C})$. Summarizing, ${\bf C}$ is a
positive essentially self-adjoint operator. In particular, there is
a unique positive self-adjoint square-root ${\bf B}$ of $\overline{{\bf C}}$ (see e.g. \cite{Kato95PTLO}),
which will be used later.

As it turns out powers of the operator ${\bf C}$ are directly related
to the energies $\E_{n}(f,g)$. Since ${\bf A}$ is normal, we immediately
obtain that the operators ${\bf C}$ and ${\bf A}$ commute on $\mathcal{D}({\bf A}^{2})$.
Hence, also higher powers of ${\bf C}$ commute with ${\bf A}$. This
is used to express $\E_{n}$ in terms of powers of ${\bf C}$. We remark that an analogous statement is well known and commonly used for symmetric operators (see, e.g., \eqref{e002} in the Introduction)
\begin{prop}
Let $({\bf A},\rD(\oA))$ be normal and $\oC$ be defined by \eqref{eq:C}. Then for $n\geq1$ we have
\[
\E_{n}(f,g)=\left(f,\oC^{n}g\right)_{\mu},
\]
whenever the right-hand side is bounded (i.e. for all $f,g\in\mathcal{D}({\bf A}^{n})$).
\end{prop}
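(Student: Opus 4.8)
The plan is to prove the identity by induction on $n$, leveraging the recursive definition of $\E_n$ together with the commutation relations furnished by normality. The base case $n=1$ is a direct computation: since $(f,\oAsh g)_{\mu}=(\oA f,g)_{\mu}$ by definition of the Hilbert-space adjoint, we get $(f,\oC g)_{\mu}=-(f,\oA g)_{\mu}-(\oA f,g)_{\mu}$, which is exactly $\E_1(f,g)=-\la \oA f\cdot g,\mu\ra-\la f\cdot \oA g,\mu\ra$ after the identification $\la u\cdot v,\mu\ra=(u,v)_{\mu}$.

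For the inductive step I would assume $\E_n(f,g)=(f,\oC^{n}g)_{\mu}$ on $\mathcal{D}(\oA^{n})$ and insert this into the recursion $\E_{n+1}(f,g)=-\E_n(\oA f,g)-\E_n(f,\oA g)$, obtaining $\E_{n+1}(f,g)=-(\oA f,\oC^{n}g)_{\mu}-(f,\oC^{n}\oA g)_{\mu}$. Moving $\oA$ across the first inner product via the adjoint turns $-(\oA f,\oC^{n}g)_{\mu}$ into $-(f,\oAsh\oC^{n}g)_{\mu}$, while in the second term I would use that $\oC$ commutes with $\oA$ to write $\oC^{n}\oA=\oA\oC^{n}$. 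Adding the two yields $-(f,(\oAsh+\oA)\oC^{n}g)_{\mu}=(f,\oC\,\oC^{n}g)_{\mu}=(f,\oC^{n+1}g)_{\mu}$, which closes the induction.

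The single crucial ingredient is that normality gives $\oA\oAsh=\oAsh\oA$, hence $\oC=-(\oA+\oAsh)$ commutes with both $\oA$ and $\oAsh$, and therefore so do all its powers $\oC^{n}$; this is precisely the commutation established in the text preceding the proposition. A second, purely computational route would bypass the induction altogether: starting from the explicit formula of Proposition \ref{prop:PropertiesEnergies}, one rewrites $\la \oA^{n-j}f\cdot\oA^{j}g,\mu\ra=(f,(\oAsh)^{n-j}\oA^{j}g)_{\mu}$ and recognizes $\sum_{j=0}^{n}\binom{n}{j}(\oAsh)^{n-j}\oA^{j}=(\oAsh+\oA)^{n}=(-1)^{n}\oC^{n}$ by the binomial theorem for the commuting pair $\oA,\oAsh$; the two factors of $(-1)^{n}$ then cancel.

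I expect the main obstacle to be the domain bookkeeping for the unbounded operators: one must check that each expression $\oC^{n}g$, $\oC^{n}\oA g$, $\oAsh\oC^{n}g$ is well defined and that the commutation $\oC^{n}\oA=\oA\oC^{n}$ genuinely holds on $\mathcal{D}(\oA^{n+1})$. Restricting $f,g$ to $\mathcal{D}(\oA^{n})$, as the statement permits, keeps every term finite and every adjoint manipulation legitimate, and it is exactly here that normality rather than mere symmetry is indispensable: for a non-normal generator the clean collapse $(\oAsh+\oA)\oC^{n}=\oC^{n+1}$ would break down.
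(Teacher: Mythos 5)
Your proof is correct and is essentially the paper's own argument: both proceed by induction, using the adjoint relation $(\oA f,h)_{\mu}=(f,\oAsh h)_{\mu}$ and the commutation $\oA\oC^{n}=\oC^{n}\oA$ on $\rD(\oA^{n+1})$ (guaranteed by normality) to show that $(f,\oC^{n}g)_{\mu}$ satisfies the same recursion as $\E_{n}$, with the identical base case $n=1$. The paper merely runs the computation in the opposite direction (expanding $(f,\oC^{n+1}g)_{\mu}$ rather than collapsing the recursion for $\E_{n+1}$), so the two arguments coincide step for step; your alternative binomial-theorem route via Proposition \ref{prop:PropertiesEnergies} is a valid extra observation but not needed.
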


\begin{proof}
We prove that the sequence, $\left\{ \left(f,\oC^{n}g\right)_{\mu}\right\} _{n\in\mathbb{N}}$
satisfies the same recursion formula as $\E_{n}(f,g)$. For $n=1$,
we have already seen that $\left(f,\oC g\right)_{\mu}=-\left(f,\oA g\right)_{\mu}-\left(\oA f,g\right)_{\mu}=\E_{1}(f,g)$.

For $n\geq2$, we observe that for $f,g\in\mathcal{D}({\bf A}^{n+1})$:
\begin{align*}
\left(f,\oC^{n+1}g\right)_{\mu} & =\left(f,\oC\oC^{n}g\right)_{\mu}=-\left(f,\oA\oC^{n}g\right)_{\mu}-\left(\oA f,\oC^{n}g\right)_{\mu}=-\left(f,\oC^{n}\oA g\right)_{\mu}-\left(\oA f,\oC^{n}g\right)_{\mu}\\
 & =-\E_{n}(f,\oA g)-\E_{n}(\oA f,g)=\E_{n+1}(f,g)\,,
\end{align*}
where we have used that $\oA$ and $\oC^n$ commute on $\rD(\oA^{n+1})$.
\end{proof}
\begin{rem}
We remark that the above formula makes also sense to define fractional powers of the energies $\E_\alpha$ via $\overline{\oC}^\alpha$ for $\alpha>0$ by interpolation. This will be investigated in subsequent work.
\end{rem}

\subsection{Log-convexity in
  $n$ and polynomial decay}\label{subsec:Log-convexity-in-n}

With the explicit and closed form of the energies $\E_{n}(f,g)=\left(f,\oC^{n}g\right)_{\mu}$
we are able to show that $t\mapsto\e_{n}(t)$ decays with polynomial
rate. To see this we show that the sequence $(\E_{n}(g))_{n\in\N}$ is
log-convex for any $g\in\rD(\oA^\infty)$.
\begin{prop}
\label{prop:LogConvexityOfSequenceOfEnergies}Let $({\bf A},\mathcal{D}({\bf A}))$
be a normal operator and let $g\in\mathcal{D}({\bf A}^{\infty})$. Then:
\begin{enumerate}
\item The sequence $\left(\E_{n}(g)\right)_{n\in\mathbb{N}}$ is log-convex,
i.e. for all $n\in\mathbb{N}$ we have $\E_{n+1}(g)\E_{n-1}(g)\geq\E_{n}(g)^{2}$.
\item If $\E_{0}(g)>0$, then we have $\left(\frac{\E_{n+1}(g)}{\E_{0}(g)}\right)^{\frac{1}{n+1}}\geq\left(\frac{\E_{n}(g)}{\E_{0}(g)}\right)^{\frac{1}{n}}$.
\end{enumerate}
\end{prop}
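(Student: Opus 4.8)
The plan is to reduce both statements to the spectral calculus of the positive essentially self-adjoint operator $\overline{\oC}$ and its positive self-adjoint square root $\oB$ (with $\oB^2=\overline{\oC}$), which were already produced above. The one genuine subtlety is that $\E_0(g)=\la g^2,\mu\ra-\la g,\mu\ra^2$ is a \emph{variance} rather than $(g,g)_\mu$, so the identity $\E_n(g)=(g,\oC^n g)_\mu$ from the previous proposition holds only for $n\geq 1$ and fails at $n=0$ --- which is precisely the index needed for the base case $n=1$ of log-convexity. To remove this discrepancy I would first replace $g$ by its mean-zero part $\hat g:=g-\la g,\mu\ra\eins$. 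Since $\oA\eins=0$ and $\oAsh\eins=0$ (the latter because $(\oAsh\eins,h)_\mu=(\eins,\oA h)_\mu=\la\oA h,\mu\ra=\la h,\oAs\mu\ra=0$ for all $h$), we get $\oC\eins=0$ and $\eins\in\rD(\oA^\infty)$; hence $\hat g\in\rD(\oA^\infty)$ and $\E_n(g)=\E_n(\hat g)$ for every $n$ (for $n\geq1$ because $\oC^n$ kills $\eins$ and $(\eins,\overline{\oC}^{\,n}g)_\mu=0$, for $n=0$ by translation invariance of the variance). As $\la\hat g,\mu\ra=0$, the variance of $\hat g$ equals its second moment, so $\E_0(\hat g)=\|\hat g\|_\mu^2=(\hat g,\oI\hat g)_\mu$, and therefore $\E_n(\hat g)=(\hat g,\overline{\oC}^{\,n}\hat g)_\mu$ now holds for all $n\geq 0$.

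Next I would record the representation $\E_n(\hat g)=(\hat g,\oB^{2n}\hat g)_\mu=\|\oB^n\hat g\|_\mu^2$, valid for all $n\geq 0$; the domains cause no trouble since $\hat g\in\rD(\oA^\infty)$ forces $\hat g\in\rD(\oB^{2n})$ for every $n$, hence $\hat g\in\bigcap_m\rD(\oB^m)$, with $\oC^n\hat g=\overline{\oC}^{\,n}\hat g$ there. Part (1) is then immediate from the Cauchy--Schwarz inequality in $\L^2(\mu)$: writing $\E_n(\hat g)=(\oB^{n-1}\hat g,\oB^{n+1}\hat g)_\mu\leq\|\oB^{n-1}\hat g\|_\mu\,\|\oB^{n+1}\hat g\|_\mu$ and squaring gives $\E_n(g)^2\leq\E_{n-1}(g)\,\E_{n+1}(g)$ for every $n\geq 1$, including the base case $n=1$, where the factor $\E_0$ now appears correctly thanks to the reduction.

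For part (2) I would deduce the claimed power-mean inequality directly from log-convexity. Setting $a_n:=\E_n(g)\geq 0$, the relation $a_n=\|\oB^n\hat g\|_\mu^2$ shows that if $a_1=0$ then $\oB\hat g=0$, so $a_n=0$ for all $n\geq 1$ and the asserted inequality reads $0\geq 0$; otherwise $a_1>0$ and a short induction using $a_{n+1}a_{n-1}\geq a_n^2$ together with $a_0>0$ forces $a_n>0$ for all $n$. In this strictly positive case I would prove $a_{n+1}^{\,n}\,a_0\geq a_n^{\,n+1}$ by induction on $n$: the base case $n=1$ is the log-convexity inequality $a_2a_0\geq a_1^2$, and the step combines $a_{n+1}\geq a_n^2/a_{n-1}$ with the induction hypothesis $a_{n-1}^{\,n}\leq a_n^{\,n-1}a_0$ to give $a_{n+1}^{\,n}\geq a_n^{2n}/a_{n-1}^{\,n}\geq a_n^{\,n+1}/a_0$. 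Dividing by $a_0^{\,n+1}$ and taking $\big(n(n+1)\big)$-th roots turns $a_{n+1}^{\,n}a_0\geq a_n^{\,n+1}$ into $(a_{n+1}/a_0)^{1/(n+1)}\geq(a_n/a_0)^{1/n}$, which is the claim. Alternatively, the same inequality is exactly the monotonicity of the $\L^p$-norms $p\mapsto\|\lambda\|_{\L^p(\tilde\nu)}$ for the normalized spectral measure $\tilde\nu:=\nu_{\hat g}/\E_0(g)$ of $\overline{\oC}$, under which $\E_n(g)/\E_0(g)=\int_0^\infty\lambda^n\,\d\tilde\nu(\lambda)$.

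I expect the main obstacle to be bookkeeping rather than conceptual: justifying the mean-zero reduction and, with it, the extension of $\E_n(g)=(g,\overline{\oC}^{\,n}g)_\mu$ to $n=0$, and verifying that all powers $\oB^n\hat g$ are well defined on $\rD(\oA^\infty)$ with $\oC^n\hat g=\overline{\oC}^{\,n}\hat g$. Once these domain and closure identifications are in place, both parts reduce to the elementary Hilbert-space inequalities above.
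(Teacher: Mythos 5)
Your proposal is correct, and for part (1) it follows the same core idea as the paper: represent $\E_n$ via the self-adjoint square root $\oB$ of $\overline{\oC}$ and apply Cauchy--Schwarz (the paper writes Cauchy--Schwarz out by hand, as a double integral of the squared antisymmetrization $\big[(\oB^{\alpha}g)(z)(\oB^{\beta}g)(z')-(\oB^{\alpha}g)(z')(\oB^{\beta}g)(z)\big]^2$, but that is the same inequality). Where you genuinely add value is the mean-zero reduction $g\mapsto\hat g=g-\la g,\mu\ra\eins$: the paper's proof sets $\beta=n-1$ and, at $n=1$, silently identifies $(g,\oC^{0}g)_\mu=(g,g)_\mu$ with $\E_0(g)$, which is the \emph{variance} and in general strictly smaller; so as written the paper only yields $\E_2(g)\,\|g\|_\mu^2\geq\E_1(g)^2$, not the stated $\E_2(g)\E_0(g)\geq\E_1(g)^2$. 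Your observation that $\oC\eins=0$ (using $\oAsh\eins=0$, which follows from $\oAs\mu=0$), hence $\E_n(g)=\E_n(\hat g)$ for all $n$ and $\E_0(\hat g)=\|\hat g\|_\mu^2$, closes exactly this gap and makes the base case legitimate. For part (2) you replace the paper's Maclaurin-style trick (raise the $j$-th log-convexity inequality to the $j$-th power and multiply the chain, yielding $a_n^{2n}\leq a_0a_n^{n-1}a_{n+1}^n$) by a direct induction on $a_{n+1}^{\,n}a_0\geq a_n^{\,n+1}$; the two arguments are equivalent in content and difficulty, but your version handles the degenerate case $a_n=0$ explicitly (the paper divides by $a_n^{n-1}$ without comment), and your spectral-measure remark ($\L^p$-norm monotonicity for the normalized spectral measure of $\overline{\oC}$) is a clean alternative viewpoint.
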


\begin{proof}
Since ${\bf A}$ is normal, the operator $\left(\oC,\mathcal{D}(\oC)\right)$ defined by \eqref{eq:C}
is positive and essentially self-adjoint. Let ${\bf B}$ be the positive
and self-adjoint square-root of $\overline{{\bf C}}$ on $\L^{2}(\mu)$,
i.e. we have $\oB^{2}g=\overline{\oC}g$ for all $g\in\mathcal{D}(\overline{{\bf C}})$.
Then, for $\alpha,\beta\in\N$
and $g\in\mathcal{D}({\bf B}^{2\max\{\alpha,\beta\}})$ we have that 
%\rb{als duale Paarung schreiben!!!}
\begin{align*}
0\leq & \int_{\mathcal{Z}}\int_{\mathcal{Z}}\Big[\big(\oB^{\alpha}g\big)(z)\big(\oB^{\beta}g\big)(z')-\big(\oB^{\alpha}g\big)(z')\big(\oB^{\beta}g\big)(z)\Big]^{2}\mu(\d z)\mu(\d z')\\
 & =\left[\int\big(\oB^{\alpha}g\big)^{2}(z)\mu(\d z)\cdot\int\big(\oB^{\beta}g\big)^{2}(z')\mu(\d z')+\int\big(\oB^{\alpha}g\big)^{2}(z')\mu(\d z')\cdot\int\big(\oB^{\beta}g\big)^{2}(z)\mu(\d z)\right.\\
 & \qquad -\left.2\int\big(\oB^{\alpha}g\big)(z)\big(\oB^{\beta}g\big)(z)\mu(\d z)\cdot\int\big(\oB^{\alpha}g\big)(z')\big(\oB^{\beta}g\big)(z')\mu(\d z')\right]\\
 & =(\oB^{\alpha}g,\oB^{\alpha}g)_{\mu}(\oB^{\beta}g,\oB^{\beta}g)_{\mu}-(\oB^{\alpha}g,\oB^{\beta}g)_{\mu}^{2} = (g,\oB^{2\alpha}g)_{\mu}(g,\oB^{2\beta}g)_{\mu}-(g,\oB^{\alpha+\beta}g)_{\mu}^{2}\,.
\end{align*}
Setting, $\alpha=n+1$ and $\beta=n-1$ and using $\oB^{2}g=\overline{\oC}g={\bf C}g$
for $g\in\mathcal{D}({\bf C}^{n+1})$, we obtain
\begin{align*}
0 & \leq(g,\oB^{2(n+1)}g)_{\mu}(g,\oB^{2(n-1)}g)_{\mu}-(g,\oB^{2n}g)_{\mu}^{2}\\
 & =(g,\oC^{n+1}g)_{\mu}(g,\oC^{n-1}g)_{\mu}-(g,\oC^{n}g)_{\mu}^{2}=\E_{n+1}(g)\E_{n-1}(g)-\E_{n}(g)^{2}\,,
\end{align*}
which proves the first claim.

For the second claim, we use the fact that for a given non-negative log-convex
sequence $\left(a_{n}\right)_{n\in\mathbb{N}}$ 
with $a_{0}>0$, we have $\left(\frac{a_{n+1}}{a_{0}}\right)^{\frac{1}{n+1}}\geq\left(\frac{a_{n}}{a_{0}}\right)^{\frac{1}{n}}$
for all $n\geq1$. A proof\footnote{The historic proof is from 1729 and goes back to C. Maclaurin.} for this can be found for example in \cite{BecBel61I}, which is given here for completeness.

Assuming that $\left(a_{n}\right)_{n\in\mathbb{N}}$ is a non-negative log-convex
sequence with $a_{0}>0$, we have 
\begin{align*}
a_1^2  \leq  a_0 a_2,\quad  a_2^2  \leq  a_1 a_3, \quad  a_3^2  \leq  a_2 a_4, \quad 
...,\quad  a_n^2  \leq  a_{n-1} a_{n+1} \,.
\end{align*}
If we take the $j$-th inequalities to the $j$-th power
and multiply all inequalities, we get
\[
a_1^2a_2^4a_3^6 \cdots a_{n-1}^{2n-2} a_n^{2n} \leq 
(a_0 a_2)^1  (a_1 a_3)^2  (a_2 a_4)^3 \cdots (a_{n-2} a_{n})^{n-1}  (a_{n-1}
a_{n+1})^n .
\]
This can be simplified to $
a_n^{2n} \leq a_0 a_n^{n-1} a_{n+1}^n$, which implies $
(a_n/a_0)^{1 \over n} \leq (a_{n+1}/a_0)^{1 \over n+1}$.

\end{proof}

With the help of the log-convexity of the sequence $\left(\E_{n}(g)\right)_{n\in\mathbb{N}}$,
we can derive asymptotic polynomial decay for $t\mapsto\e_{n}(t)$ as an upper bound.
\begin{thm}
\label{thm:PolynomialDecayEnergy}Let $g$ be a solution of $\dot{g}=\oA g$ with $g(0)=g_0\in\rD(\oA^\infty)$
such that $\e_{0}(0)>0$. Moreover, let the generator $\oA$ be normal.
If $\e_{n}(0)>0,$ then we have $$\e_{n}(t)\leq\left(\e_{n}(0)^{-1/n}+\frac{t}{n}\e_{0}(0)^{-1/n}\right)^{-n}.$$
In particular, $t\mapsto\e_{n}(t)$ decays to zero with polynomial
rate $O(t^{-n})$.
\end{thm}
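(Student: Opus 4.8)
The plan is to turn the log-convexity of the energy sequence into a nonlinear differential inequality for $t\mapsto\e_n(t)$ and then integrate it. First I would fix a time $t$ and apply Proposition \ref{prop:LogConvexityOfSequenceOfEnergies} to $g(t)=\oT(t)g_0$, which again lies in $\rD(\oA^\infty)$. Part (2) of that proposition, applied with the consecutive indices $n$ and $n+1$, yields
\[
\left(\frac{\e_{n+1}(t)}{\e_0(t)}\right)^{1/(n+1)}\geq\left(\frac{\e_n(t)}{\e_0(t)}\right)^{1/n},
\]
provided $\e_0(t)>0$. Raising to the power $n+1$ and rearranging gives the pointwise-in-time estimate $\e_{n+1}(t)\geq\e_0(t)^{-1/n}\,\e_n(t)^{(n+1)/n}$. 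Since $\e_0$ is non-increasing by Theorem \ref{thm:PropertiesEnergies}, we have $\e_0(t)\leq\e_0(0)$, hence $\e_0(t)^{-1/n}\geq\e_0(0)^{-1/n}$, and therefore
\[
\e_{n+1}(t)\geq\e_0(0)^{-1/n}\,\e_n(t)^{(n+1)/n}.
\]

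Next I would feed this into the identity $\frac{\d}{\d t}\e_n(t)=-\e_{n+1}(t)$ from Theorem \ref{thm:PropertiesEnergies}, obtaining the autonomous Bernoulli-type differential inequality
\[
\frac{\d}{\d t}\e_n(t)\leq-\e_0(0)^{-1/n}\,\e_n(t)^{(n+1)/n}.
\]
Writing $c:=\e_0(0)^{-1/n}$ and using the substitution $u(t):=\e_n(t)^{-1/n}$, one computes $\frac{\d}{\d t}u(t)=-\tfrac1n\e_n(t)^{-(n+1)/n}\,\dot\e_n(t)\geq\tfrac{c}{n}$ wherever $\e_n(t)>0$, since the powers of $\e_n$ cancel exactly. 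Integrating from $0$ to $t$ gives $u(t)\geq u(0)+\tfrac{c}{n}t$, that is $\e_n(t)^{-1/n}\geq\e_n(0)^{-1/n}+\tfrac{t}{n}\,\e_0(0)^{-1/n}$, which is precisely the claimed bound after inverting, and the asymptotic rate $O(t^{-n})$ is immediate.

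The step requiring care is the strict positivity needed to divide by $\e_0(t)$ and by $\e_n(t)$, so I would run the argument on the maximal interval $[0,T^*)$ on which $\e_n(t)>0$. On this interval $\e_0(t)>0$ as well: if $\e_0(t)=0$ then $g(t)$ is $\mu$-a.e.\ constant, forcing $\e_n(t)=0$ and contradicting $t<T^*$. The differential-inequality computation then applies verbatim on $[0,T^*)$. If $T^*=\infty$ we are done; if $T^*<\infty$, then $\e_n$ is non-negative and non-increasing (Theorem \ref{thm:PropertiesEnergies}), so $\e_n(t)=0$ for $t\geq T^*$ and the upper bound holds trivially there. I expect this positivity bookkeeping, rather than the integration itself, to be the only genuine obstacle.
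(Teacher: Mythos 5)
Your proof is correct and takes essentially the same route as the paper's: the same consequence of Proposition \ref{prop:LogConvexityOfSequenceOfEnergies}, the same monotonicity bound $\e_{0}(t)\leq\e_{0}(0)$, and the same integration of the resulting Bernoulli-type differential inequality via the substitution $u=\e_{n}^{-1/n}$, which is exactly the paper's function $X$ up to a constant factor. Your maximal-interval bookkeeping for positivity is a slightly more careful version of the paper's remark that the claim is trivial once $\e_{n}$ vanishes, but it is the same argument in substance.
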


\begin{proof}
Using that $g=g(t)$ is solution, we have that
\[
\frac{\d}{\d t}\e_{n}(g)=\dot{\e}_{n}(t)=-\e_{n+1}(t)\,.
\]
By Proposition \ref{prop:LogConvexityOfSequenceOfEnergies} we have $\E_{n+1}(g)\geq\E_{0}(g)^{-1/n}\E_{n}(g)^\frac{n+1}{n}$
for all $n\geq1$. Using
$\e_{0}(0)\geq\e_{0}(t)$, we obtain that
\begin{equation}
\dot{\e}_{n}(t)\leq-\e_{0}(t)^{-1/n}\e_{n}(t)^{\frac{n+1}{n}}\leq-\e_{0}(0)^{-1/n}\e_{n}(t)^{\frac{n+1}{n}}\,.\label{eq:DifferentialInequalityEn}
\end{equation}
From this differential inequality we conclude that $\e_{n}(t)$ satisfies
the desired estimate. Indeed, we may assume that $\e_{n}(t)>0$ for
all $t>0$, otherwise the claim is trivial. Introducing 
\[
\alpha:=\e_{0}(0)^{-1/n},\quad X(\e):=\frac{n}{\alpha}\e^{-1/n},\quad x(t):=X(\e_{n}(t))-X(\e_{n}(0))-t,
\]
we get $x(0)=0$ and 
\[
\dot{x}(t)=X'(\e_{n}(t))\dot{\e}_{n}(t)-1=-\frac{1}{\alpha}\e_{n}(t)^{-1-1/n}\dot{\e_{n}}(t)-1\geq0,
\]
by \eqref{eq:DifferentialInequalityEn}. Hence, we have $x(t)\geq0$ for $t\geq0$
which means $X(\e_{n}(t))\geq X(\e_{n}(0))+t$. Inserting the
definition of $X$, we obtain
\[
\frac{n}{\alpha}\e_{n}(t)^{-1/n}\geq\frac{n}{\alpha}\e_{n}(0)^{-1/n}+t\quad\Rightarrow\quad\e_{n}(t)\leq\left(\e_{n}(0)^{-1/n}+\e_{0}(0)^{-1/n}\frac{t}{n}\right)^{-n},
\]
which we wanted to show.
\end{proof}

\begin{rem}
Crucial for Theorem \ref{thm:PolynomialDecayEnergy} was the log-convexity
from Proposition \ref{prop:LogConvexityOfSequenceOfEnergies} of the
form $\frac{\E_{n+1}(g)}{\E_{0}(g)}\geq\left(\frac{\E_{n}(g)}{\E_{0}(g)}\right)^{\frac{n+1}{n}}$.
Heuristically, the exponent on the right-hand side converges to 1
as $n\to\infty$, meaning that the above estimate becomes more and
more a linear inequality with increasing $n\in\N$. This means that
at least in theory it becomes easier to prove a linear inequality
for larger $n\in\N$, and to proceed as in Corollary \ref{cor:ExponentialDecay} to obtain exponential decay.
\end{rem}
\begin{rem}
We remark, that for a general Markov generator $\oA$ no exponential decay is to be expected, because, in principle, the real parts of the spectrum of $\oA$ may have a clustering point in 0, or, in other words, no spectral gap is present. 

Although it is not easy to find examples for this and to make the above polynomial estimates explicit, we provide a well-known example, namely diffusion on the real line. However, we note that the example does not fit exactly into the presented theory since it considers a non-compact domain. Considering the diffusion equation $u_t = {D \over 2} u_{xx}$
on $\R$, the fundamental solution is given by the Gaussian
kernel $u={1 \over \sqrt{2 \pi D t}} \mathrm{e}^{-{x^2 \over 2 D t}}$, where the stationary measure is the Lebesgue measure $\mu=\d x$. It is well-known that whole negative real axis belongs to the continuous spectrum.   It is easy to calculate that
\begin{align*}
\e_0(t) &= 1, \quad &\e_1(t) &= -2 ( u,\oA u)_\mu = {1 \over 4 \sqrt{\pi D t^3}},\\
\e_2(t) &= 4 ( u,\oA^2 u))_\mu = {3 \over 8 \sqrt{\pi D t^5}}, \quad 
&\e_3(t) &= -8 ( u,\oA^3 u))_\mu = {3\cdot 5 \over 16 \sqrt{\pi D t^7}}, \\
\e_4(t) &= 16 ( u,\oA^4 u))_\mu =
            {3\cdot 5\cdot 7 \over 32 \sqrt{\pi D t^9}} ,&\e_5(t) &= -32 ( u,\oA^5 u))_\mu =
            {3\cdot 5\cdot 7 \cdot 9\over 64 \sqrt{\pi D t^{11}}} \,.
\end{align*}
\end{rem}

\subsection{\label{subsec:Log-convexity-in-time}Log-convexity  in time}

The log-convexity of the sequence $(\E_n(g))_n\in\N$ provides also log-convexity of the trajectory of energies $t\mapsto\e_n(t)$, which is a stronger statement than the convexity of $t\mapsto\e_n(t)$ from Theorem \ref{thm:PropertiesEnergies}. Moreover, log-convexity provides another interesting feature, namely that $t\mapsto\big(\e_n(t)/\e_n(0)\big)^{1/t}$ is increasing in time. This can be understood an a complementary information to the asymptotic decay of $t\mapsto\big(\e_n(t)/\e_n(0)\big)$ as in Theorem \ref{thm:PolynomialDecayEnergy}.
\begin{thm}\label{thm:LogConvexityInTime}
Let $g$ be a solution of $\dot{g}=\oA g$  with $g(0)=g_0\in\rD(\oA^\infty)$
such that for fixed $n\in\N$ we have $\e_n(t)>0$ for all $t>0$. Moreover, let $\oA$ be normal. Then we have the following:
\begin{enumerate}
\item The energy trajectory $t\mapsto\e_{n}(t)$ is log-convex in time, i.e., $\frac{\d^{2}}{\d t^{2}}\log\e_{n}(t)\geq0$.
\item The trajectory $t\mapsto\left(\frac{\e_{n}(t)}{\e_n{0}}\right)^{1/t}$ is increasing in time, and, moreover, it converges.
\end{enumerate}
\end{thm}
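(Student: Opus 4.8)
The plan is to deduce both assertions from two facts already at our disposal: the identities $\dot\e_n(t)=-\e_{n+1}(t)$ and $\ddot\e_n(t)=\e_{n+2}(t)$ from Theorem \ref{thm:PropertiesEnergies}, and the log-convexity in the index $n$ from Proposition \ref{prop:LogConvexityOfSequenceOfEnergies}. Throughout, the hypothesis $\e_n(t)>0$ for $t>0$ (together with monotonicity, which gives $\e_n(0)\geq\e_n(t)>0$, hence $\e_n(0)>0$) guarantees that $\log\e_n(t)$ and the quotient $\e_n(t)/\e_n(0)$ are well defined.

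For the first claim I would compute the second derivative of $\log\e_n$ directly. Since $t\mapsto\e_n(t)$ is twice differentiable along solutions with $g_0\in\rD(\oA^\infty)$ and $\e_n(t)>0$, we have
\[
\frac{\d^2}{\d t^2}\log\e_n(t)=\frac{\e_n(t)\,\ddot\e_n(t)-\dot\e_n(t)^2}{\e_n(t)^2}.
\]
Inserting $\dot\e_n(t)=-\e_{n+1}(t)$ and $\ddot\e_n(t)=\e_{n+2}(t)$, the numerator becomes $\e_n(t)\e_{n+2}(t)-\e_{n+1}(t)^2$, so it suffices to show $\e_n(t)\e_{n+2}(t)\geq\e_{n+1}(t)^2$. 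This is exactly the log-convexity of the sequence $(\E_m(g(t)))_{m\in\N}$ supplied by Proposition \ref{prop:LogConvexityOfSequenceOfEnergies}, applied to the fixed element $g=g(t)\in\rD(\oA^\infty)$ at index $m=n+1$. Hence $\frac{\d^2}{\d t^2}\log\e_n(t)\geq0$.

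For the second claim I would set $\phi(t):=\log\e_n(t)$ and write
\[
\left(\frac{\e_n(t)}{\e_n(0)}\right)^{1/t}=\exp\!\left(\frac{\phi(t)-\phi(0)}{t}\right)=:\exp\big(\psi(t)\big).
\]
Here $\psi(t)$ is the slope of the secant of $\phi$ through $(0,\phi(0))$ and $(t,\phi(t))$. Since $\phi$ is convex by the first claim, this secant slope is non-decreasing in $t$, so $t\mapsto\psi(t)$, and therefore $t\mapsto(\e_n(t)/\e_n(0))^{1/t}$, is increasing. For convergence I would invoke that $t\mapsto\e_n(t)$ is monotonically decreasing (Theorem \ref{thm:PropertiesEnergies}), whence $\e_n(t)\leq\e_n(0)$ and $\psi(t)\leq0$ for all $t>0$; an increasing function bounded above converges, so $\psi(t)$ tends to a finite limit in $(-\infty,0]$ and $\exp(\psi(t))$ converges accordingly.

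The whole argument is short once the preceding results are invoked, and there is no genuine obstacle, since the substantive inequality has already been isolated as log-convexity in $n$. The only points deserving care are the index bookkeeping when applying Proposition \ref{prop:LogConvexityOfSequenceOfEnergies} at index $n+1$ (legitimate because $g(t)\in\rD(\oA^\infty)$ for every $t$, as $\oT(t)$ preserves this core) and, for the convergence statement, the combination of the monotonicity of $\psi$ coming from convexity with the uniform upper bound $\psi\leq0$ coming from the decay of $\e_n$.
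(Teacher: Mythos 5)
Your proof is correct, and for the first claim it coincides with the paper's argument word for word: differentiate $\log\e_n$, substitute $\dot\e_n=-\e_{n+1}$, $\ddot\e_n=\e_{n+2}$, and apply the log-convexity in $n$ from Proposition \ref{prop:LogConvexityOfSequenceOfEnergies} at index $n+1$. For the second claim the two arguments diverge slightly. To get monotonicity of $t\mapsto\big(\e_n(t)/\e_n(0)\big)^{1/t}$, the paper verifies the integral identity $\frac{\d}{\d t}\big(\tfrac 1 t \log f_n(t)\big)=\tfrac{1}{t^2}\int_0^t s\,\frac{\d^2}{\d s^2}\big(\log f_n(s)\big)\,\d s$ and uses log-convexity under the integral; your appeal to the monotonicity of secant slopes of the convex function $\phi=\log\e_n$ through the base point $(0,\phi(0))$ is exactly the same fact (integrate the identity by parts to see this), just invoked as a standard property of convex functions rather than re-derived. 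Where you genuinely improve on the paper is the convergence step: the paper bounds $f_n(t)^{1/t}\leq 1$ by citing the polynomial decay of Theorem \ref{thm:PolynomialDecayEnergy}, which drags in normality-dependent machinery and tacitly requires $\e_0(0)>0$, whereas you observe that the monotone decay $\e_n(t)\leq\e_n(0)$ from Theorem \ref{thm:PropertiesEnergies} already gives $\psi(t)=\tfrac1t\log\big(\e_n(t)/\e_n(0)\big)\leq 0$, so an increasing function bounded above converges. This is more elementary and self-contained; the only thing the paper's route buys in exchange is an explicit quantitative upper bound on the limit, which the theorem as stated does not need. Your bookkeeping remarks (that $\e_n(0)>0$ follows from monotonicity and positivity for $t>0$, and that $g(t)\in\rD(\oA^\infty)$ since $\oT(t)$ preserves this core) are correct and close the only small gaps one could worry about.
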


\begin{proof}
By log-convexity with respect to $n$ we have
\begin{align*}
\ddot{\e}_n(t) &= - \dot{\e}_{n+1}(t) =  \e_{{n+2}}(t) \geq
\e_{n}^{-1}(t) \e_{{n+1}}^2(t) = \e_{n}^{-1}(t) \dot{\e}_{n}^2(t), 
\end{align*}
where we have used that $\e_n(t)$ is strictly positive for all $t>0$. Hence, we conclude
\begin{align*}
\frac{\d^{2}}{\d t^{2}} \log \e_n(t) =\frac{\d}{\d t} \frac{\dot{\e}_n(t)}{\e_n(t)} =\frac{  \e_{n}(t) \ddot{\e}_n(t) -  \dot{\e}_n^2(t)}{
  \e_{n}^2(t)} \geq 0.
\end{align*}
This proves the first claim.

For the second claim, we define the function $f_n(t)=\frac{\e_n(t)}{\e_n(0)}$. Then, $f_n$ is also log-convex in time, positive, and we have $f_n(0)=1$. Using the following identity
\begin{align*}
\frac{\d}{\d t}\bigg(\frac 1 t \log f_n(t)\bigg) = \frac{1}{t^2}\int_0^t s ~\frac{\d^2}{\d s^2}\bigg(\log f_n(s)\bigg)~\d s
\end{align*}
which can be easily checked to hold for all $t>0$ and any positive $C^1$-function $f_n$ satisfying $f_n(0)=1$, we have that 
\begin{align*}
0\leq \frac{\d}{\d t}\bigg(\frac 1 t \log f_n(t)\bigg) = \frac{\d}{\d t}\bigg( \log f_n(t)^{1/t}\bigg).
\end{align*}
Hence, $t\mapsto \log f_n(t)^{1/t}$ is increasing which implies that also $t\mapsto f_n(t)^{1/t} = \big(\frac{\e_n(t)}{\e_n(0)}\big)^{1/t}$ is increasing.

To see that $f_n$ converges, we use the polynomial bound from Theorem \ref{thm:PolynomialDecayEnergy}. We have
\begin{align*}
f_n(t) &= \frac{\e_n(t)}{\e_n(0)} \leq \left(1+\bigg(\frac{\e_{0}(0)}{\e_{n}(0)}\bigg)^{-1/n}\frac{t}{n}\right)^{-n}\\
\Rightarrow f_n(t)^{1/t} &= \bigg(\frac{\e_n(t)}{\e_n(0)} \bigg)^{1/t} \leq \left(1+\bigg(\frac{\e_{0}(0)}{\e_{n}(0)}\bigg)^{-1/n}\frac{t}{n}\right)^{-n/t}\leq 1.
\end{align*}
In the last estimate we used that $(1+\alpha s)^{-1/s}\leq 1$ for all $s>0$ and $\alpha>0$. This completes the proof of the second claim.
\end{proof}
Since $\e_{n}(0)^{1/t}$ converges to 1, we obviously obtain that $\e_{n}(t)^{1/t}$ converges as $t\to \infty$. We finally remark that the above formulas show clearly the connection of the limit $\lim_{t\to\infty} f_n(t)^{1/t} =: \mathrm{e}^{-\lambda}$, $\lambda\in[0,\infty[$ to the spectral gap of $\oA$.

\subsection{Discussion of normality\label{sec:Discussion-of-normality}}

In this section we discuss the assumption that the $\L^{2}(\mu)$-version
of ${\bf A}$ is normal. In particular, we are interested how normality
transfers to operators on the space $\mathcal{C}(\mathcal{Z})$.
Moreover, we will see that if ${\bf A}$ is not normal in $\L^{2}(\mu)$
then the sequence $(\E_{n}(g))_{n\in\N}$ is not log-convex in general.

From operator-theoretic perspective, normality for an unbounded
operator is
the canonical property of an operator in Hilbert space. Roughly speaking, a
normal operator is just as diagonalizable as a self-adjoint operator, but may
have complex spectrum.  Normal operators behave to self-adjoint ones like
complex numbers to real ones. One can show that for a generic Markov operator there exists a Hilbert space
in which its extension is normal (see \cite{SteSte21SMOGHST} where for general
Markov operators appropriate Hilbert spaces are constructed). The Hilbert space is not necessarily $\L^{2}(\mu)$ suggesting that lack of normality of an operator indicates an incorrectly
chosen Hilbert space (compare also with the footnote on p. \pageref{foot1}).

To understand what $\L^{2}(\mu)$-normality means on $\rC(\rZ)$, we make the technical assumption that all involved operators
are bounded (i.e. neglecting non-trivial domain issues), although much
reasoning generalizes to unbounded operators. We define the multiplication operator ${\bf Q}_{\mu}$
given by
\[
{\bf Q}_{\mu}:\mathcal{C}(\mathcal{Z})\to\mathcal{C}^{*}(\mathcal{Z}),\quad\forall f,g\in\mathcal{C}(\mathcal{Z})\ :\ \langle f,{\bf Q}_{\mu}g\rangle=\langle f\cdot g,\mu\rangle\,.
\]
Then ${\bf Q}_{\mu}$ is symmetric and positive. With this we may
express the $\L^{2}(\mu)$-adjoint $\oAsh $ of ${\bf A}$
in term of the Banach-space dual $\oAs $: The $\L^{2}(\mu)$-adjoint
is given by $(f,\oAsh g)_{\mu}=({\bf A}f,g)_{\mu}$, which
is, by definition, equivalent to 
\begin{align*}
\langle f\cdot\oAsh g,\mu\rangle & =\langle{\bf A}f\cdot g,\mu\rangle\ \Leftrightarrow\ \langle f,{\bf Q}_{\mu}\oAsh g\rangle=\langle{\bf A}f,{\bf Q}_{\mu}g\rangle=\langle f,\oAs {\bf Q}_{\mu}g\rangle.
\end{align*}
Since $f,g$ are arbitrary,
we conclude that the $\L^{2}(\mu)$-adjoint $\oAsh$ has to satisfy $
{\bf Q}_{\mu}\oAsh=\oAs {\bf Q}_{\mu}$.
Assuming that there is a Markov generator $\oX$ solving the operator equation
$\oQ_\mu \oX = \oAs \oQ_\mu$ as an equation for operators $\rC(\rZ)
  \to \rC^*(\rZ)$, we observe that $\oAsh$ is the $\L^2(\mu)$-extension of
$\oX$. If, 
moreover, $\mu$ is positive (i.e. $\mu(U) > 0$ for any open set $U \subset \rZ$), then on the range of
$\oAs \oQ_\mu$, $\oQ_\mu$ is one-to-one\footnote{The function $\oQ_\mu^{-1}p$ can be understood as a Radon-Nikodym derivative of $p\in\rC^*(\rZ)$ with respect to $\mu$.} and we have $\oX = \oQ_\mu^{-1} \oAs
\oQ_\mu$. For details, we refer to \cite{Step22PIMO}.
%In particular, the operator ${\bf C}$ restricted to $\mathcal{C}(\mathcal{Z})$ is formally given by $\oC=-\oA-\oQ_{\mu}^{-1}\oAs \oQ_{\mu}$.

Regarding symmetry, we observe that ${\bf A}$ is $\L^{2}(\mu)$-self-adjoint
if and only if ${\bf Q}_{\mu}{\bf A}=\oAs {\bf Q}_{\mu}$, which for stochastic
processes is usually called \textit{detailed balance}, or that $\oAs$ generates a reversible process. The normality condition can be expressed by $\oX\oA=\oA\oX$, or equivalently, by ${\bf A}{\bf Q}_{\mu}^{-1}\oAs {\bf Q}_{\mu}={\bf Q}_{\mu}^{-1}\oAs {\bf Q}_{\mu}{\bf A}$.
We discuss these two conditions with a finite dimensional example.

Let us consider on $\mathcal{Z}=\left\{ 1,2,3\right\} $ a Markov
generator given by
\[
\oA=\begin{pmatrix}-a & a & 0\\
0 & -b & b\\
c & 0 & -c
\end{pmatrix},
\]
which describes the exchange of mass along a loop with rates $a,b,c>0$.
The stationary measure is proportional to $\mu=\left(\frac{1}{a},\frac{1}{b},\frac{1}{c}\right)$,
i.e. $\oAs \mu=0$. The multiplication operator ${\bf Q}_{\mu}$
is a diagonal operator given by $\mathrm{diag}(\mu)$. An easy calculation shows that the $\L^{2}(\mu)$-adjoint $\oAsh$
is then given by 
\[
\oAsh =\begin{pmatrix}-a & 0 & a\\
b & -b & 0\\
0 & c & -c
\end{pmatrix},
\]
describing the exchange of mass along the reverse loop. We observe
that ${\bf A}\neq\oAsh $, i.e. ${\bf A}$ never satisfies
detailed balance.

Moreover, we can easily verify that $\oA$ and $\oAsh$
commute (i.e. $\oA$ is normal) if and only if $a=b=c$. In this situation, Theorem \ref{thm:PolynomialDecayEnergy}
is applicable and provides the polynomial decay. Of course, along
solutions $\mathrm{e}^{t{\bf A}}$ the energies $\e_{n}$ will decay also exponentially fast with rate given by the real part of the first non-trivial eigenvalue of ${\bf A}$, which is $-\frac{3}{2}a$.

Finally, we observe that log-convexity for the sequence $(\E_n(g))_{n\in\N}$
does not hold if ${\bf A}$ and $\oAsh $ do not commute.
To see this, we set $a=4$, $b=c=1$, compute $\E_{0},\E_{1},\E_{2}$ and evaluate $\E_{2}\E_{0}-\E_{1}^{2}$
for $g_\alpha=(1,2\alpha,4\alpha)$ which is
\[
\E_{2}(g_\alpha)\E_{0}(g_\alpha)-\E_{1}^{2}(g_\alpha) = \frac 8 3 (1-3\alpha)\alpha.
\]
We see that this is either positive or negative depending on the choice
of the parameter $\alpha$. Thus, we do not have log-convexity. (Although we have exponential decay, of course.)

\vspace{1.5cm}\noindent \textbf{Acknowledgement:} The research of A.S. has been funded by Deutsche Forschungsgemeinschaft (DFG) through the Collaborative
Research Center SFB 1114 ‘Scaling Cascades in Complex Systems’ (Project
No. 235221301), Subproject C05 ‘Effective models for materials and
interfaces with multiple scales’.

\footnotesize
%\bibliographystyle{/Users/astephan/Documents/Science/my_alpha}
%\bibliography{/Users/astephan/Documents/Science/total-bib-file}

\newcommand{\etalchar}[1]{$^{#1}$}
\def\cprime{$'$}

%\vspace{2cm}

%\begin{thebibliography}{20}
%\bibitem{1184} {\rm W. Arendt, A. Grabosch, G. Greiner, U. Groh, H.P. Lotz, U. Moustakas, R. Nagel, F. Neubrander, U. Schlotterbeck},  {\it One-parameter Semigroups of Positive Operators}, Springer 1986.

%\bibitem{stephan-model} {\rm H. Stephan}, {\it  Modeling of drift-diffusion systems},  Zeitschrift fuer Angewandte Mathematik und Physik (ZAMP) 60 (2009) pp.33-53

%\bibitem{stephan-entropy} {\rm H. Stephan},  {\it A mathematical framework for general classical systems and time irreversibility as its consequence.} {\it WIAS Berlin 2011}; Preprint n$^\circ$1629.

%\bibitem{tzanaki} {\rm C. Tzanakis, A.C. Grecos}, {\it Classical Markovian Kinetic Equations: Explicit Form and H-Theorem}, arXiv:physics/9708031.

%\bibitem{zegarl} {\rm Zegarlinski},

%\bibitem{hlp}{\rm G. H. Hardy, J. E. Littlewood, G. Polya},  {\it Inequalities}, Cambridge University Press, 1952.

%\bibitem{papoulis} {\rm A. Papoulis} 
%{\it Probability, Random Variables, and Stochastic Processes}, 2nd ed. New York: McGraw-Hill, 1984. 

%\bibitem{pazy} {\sc A. Pazy}. Semigroups of Linear Operators and Applications to Partial Differential Equations. Springer. 1983

%\end{thebibliography}

\end{document}